\DeclareMathOperator{\ad}{ad}
\DeclareMathOperator{\Ad}{Ad}
\DeclareMathOperator{\Isom}{Isom}
\DeclareMathOperator{\trace}{trace}
\DeclareMathOperator{\rk}{rk}
\renewenvironment{proof}[1][Proof]{\textbf{#1.} }
{\ \rule{0.5em}{0.5em}}
\newtheorem{theorem}{Theorem}
\newtheorem{proposition}{Proposition}
\newtheorem{lemma}{Lemma}
\newtheorem{corollary}{Corollary}
\newtheorem{question}{Question}
\theoremstyle{definition}
\newtheorem{remark}{Remark}
\newtheorem{example}{Example}
\begin{document}

\title
[On homogeneous geodesics and weakly symmetric spaces \dots] {On homogeneous geodesics and \\ weakly symmetric spaces}

\author{Valeri\u{\i} Berestovski\u{\i}}
\address{V.\,N. Berestovski\u{\i} \newline
Sobolev Institute of Mathematics of the Siberian Branch \newline
of the Russian Academy of Sciences, Novosibirsk, Acad. Koptyug ave. 4, \newline
630090, Russia}
\address{Novosibirsk State University, \newline Mechanics-Mathematical department
\newline 2 Pirogov str., Novosibirsk, 630090, Russia}
\email{vberestov@inbox.ru}

\author{Yuri\u{\i}~Nikonorov}
\address{Yu.\,G. Nikonorov \newline
Southern Mathematical Institute of the Vladikavkaz Scientific Centre \newline
of the Russian Academy of Sciences, Vladikavkaz, Markus st. 22, \newline
362027, Russia}
\email{nikonorov2006@mail.ru}

\begin{abstract}

In this paper, we establish a sufficient condition for a geodesic in a Riemannian manifold
to be homogeneous, i.~e. an orbit of an $1$-parameter isometry group. As an application of
this result, we provide a new proof of the fact that every weakly symmetric space is geodesic
orbit manifold, i.~e. all its geodesics are homogeneous. We also study general properties of homogeneous geodesics,
in particular, the structure of the closure of a given homogeneous geodesic. We present
several examples where this closure is a torus of dimension $\geq 2$ which is (respectively, is not) totally
geodesic in the ambient manifold. Finally, we discuss homogeneous
geodesics in Lie groups supplied with left-invariant Riemannian metrics.

\vspace{2mm} \noindent 2010 Mathematical Subject Classification:
53C20, 53C25, 53C35.

\vspace{2mm} \noindent Key words and phrases: geodesic orbit Riemannian space, homogeneous
Riemannian manifold, homogeneous space, quadratic mapping, totally geodesic torus,
weakly symmetric space.
\end{abstract}

\maketitle

\section{Introduction, notation and useful facts}

Let $(M,g)$ be a Riemannian manifold and let $\gamma: \mathbb{R} \rightarrow M$ be a
geodesic in  $(M,g)$. The geodesic $\gamma$ is called {\it homogeneous} if
$\gamma(\mathbb{R})$ is an orbit of an $1$-parameter subgroup of $\Isom(M,g)$, the full
isometry group of $(M,g)$. A Riemannian manifold $(M,g)$ is called a  manifold with
homogeneous geodesics or a geodesic orbit manifold if any
geodesic $\gamma$ of $M$ is homogeneous.

These definitions are naturally generalized to the case when all isometries are taken from
a given Lie subgroup $G\subset \Isom(M,g)$, that acts transitively
on $M$. In this case we get the notions of $G$-homogeneous geodesics and $G$-homogeneous
geodesic orbit spaces. This terminology was introduced in \cite{KV} by O.~Kowalski and
L.~Vanhecke, who initiated a systematic study of such spaces.
We refer to \cite{KV}, \cite{Arv2017}, and \cite{Nikonorov2017} for expositions on
general properties of geodesic orbit Riemannian spaces and some historical survey about
this topic.

Let $(M=G/H, g)$ be a homogeneous Riemannian manifold, where $G$ is the identity
component of $\Isom(M,g)$ and $H$ is the isotropy subgroup at a point $o\in M$.
Since $H$ is compact, there is an $\Ad(H)$-invariant decomposition
\begin{equation}\label{reductivedecomposition}
\mathfrak{g}=\mathfrak{h}\oplus \mathfrak{m},
\end{equation}
where $\mathfrak{g}={\rm Lie }(G)$ and $\mathfrak{h}={\rm Lie}(H)$.
The Riemannian metric $g$ is $G$-invariant and is determined
with an $\Ad(H)$-invariant inner product $g = (\cdot,\cdot)$ on
the space $\mathfrak{m}$ which is identified with the tangent
space $M_0$ at the initial point $o = eH$.
By $[\cdot, \cdot]$ we denote the Lie bracket in $\mathfrak{g}$, and by
$[\cdot, \cdot]_{\mathfrak{m}}$ its $\mathfrak{m}$-component
according to (\ref{reductivedecomposition}).
We recall (in the above terms) a well-known criterion of geodesic orbit spaces.

\begin{lemma}[\cite{KV}]\label{GO-criterion}
A homogeneous Riemannian manifold   $(M=G/H,g)$ with the reductive
decomposition  {\rm (\ref{reductivedecomposition})} is a geodesic orbit space if and
only if  for any $X \in \mathfrak{m}$ there is $Z \in \mathfrak{h}$ such that
$([X+Z,Y]_{\mathfrak{m}},X) =0$ for all $Y\in \mathfrak{m}$.
\end{lemma}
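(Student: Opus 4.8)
The plan is to reduce the global, geometric statement to a purely algebraic condition at the base point $o=eH$ by means of two observations: that it suffices to test geodesics issuing from $o$, and that homogeneous geodesics through $o$ are exactly the orbits of one-parameter subgroups whose generator projects onto the prescribed initial velocity. The bridge between these two observations is the so-called geodesic lemma, which I would derive rather than merely quote.

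First I would use homogeneity to localize. Given any geodesic $\gamma$ of $(M,g)$, some element of $G$ carries $\gamma(0)$ to $o$; since $G$ acts by isometries it maps $\gamma$ to a geodesic through $o$ and conjugates one-parameter subgroups to one-parameter subgroups. Hence $(M,g)$ is a geodesic orbit space if and only if every geodesic emanating from $o$ is homogeneous. Under the identification $M_0\cong\mathfrak m$ such a geodesic is determined by its initial velocity $X\in\mathfrak m$, and it is homogeneous precisely when it coincides with an orbit $t\mapsto\exp(tW)\cdot o$ for some $W\in\mathfrak g$. Since the fundamental vector field map $W\mapsto\widehat W$ is linear and the $\mathfrak h$-part of $W$ fixes $o$, differentiating the orbit at $t=0$ yields the $\mathfrak m$-component $W_{\mathfrak m}$, so matching initial velocities forces $W_{\mathfrak m}=X$; equivalently $W=X+Z$ with $Z\in\mathfrak h$ arbitrary. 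Thus the lemma will follow once I show that the orbit of $W=X+Z$ is a geodesic if and only if $([X+Z,Y]_{\mathfrak m},X)=0$ for all $Y\in\mathfrak m$.

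This last equivalence is the crux, and is where I expect the real work to lie. Denote by $\widehat W$ the Killing field generated by $W$, so that $\gamma(t)=\exp(tW)\cdot o$ is an integral curve of $\widehat W$ with $\widehat W(o)=X$. Because $\exp(tW)$ is an isometry taking $o$ to $\gamma(t)$ and carrying $\widehat W(o)$ to $\widehat W(\gamma(t))$, the curve $\gamma$ is a geodesic if and only if $\nabla_{\widehat W}\widehat W$ vanishes at the single point $o$. To evaluate this covariant derivative against an arbitrary $\widehat Y$, $Y\in\mathfrak m$, I would combine the Koszul formula with two structural facts: the Lie-algebra anti-homomorphism $[\widehat U,\widehat V]=-\widehat{[U,V]}$, and the Killing property $\mathcal L_{\widehat W}g=0$, which gives $\widehat W(\widehat U,\widehat V)=([\widehat W,\widehat U],\widehat V)+(\widehat U,[\widehat W,\widehat V])$ for the directional derivatives of the metric-coefficient functions. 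Substituting these into Koszul and evaluating at $o$ collapses the derivative terms and leaves $(\nabla_{\widehat W}\widehat W,\widehat Y)_o=-([W,Y]_{\mathfrak m},X)$. Hence the orbit is a geodesic exactly when $([W,Y]_{\mathfrak m},X)=0$ for all $Y\in\mathfrak m$, as required.

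Finally I would assemble the pieces: combining the localization step with the algebraic criterion just established shows that $(M,g)$ is a geodesic orbit space if and only if for every $X\in\mathfrak m$ there is $Z\in\mathfrak h$ with $([X+Z,Y]_{\mathfrak m},X)=0$ for all $Y\in\mathfrak m$, which is the assertion. The main obstacle is the sign bookkeeping in the derivation of the geodesic lemma, namely keeping track of the anti-homomorphism sign and of which terms survive after using that $\widehat W$ is Killing; everything else is formal. A minor point to dispatch is that $[\mathfrak h,\mathfrak m]\subseteq\mathfrak m$, which follows from the $\Ad(H)$-invariance of $\mathfrak m$ and guarantees $[Z,Y]_{\mathfrak m}=[Z,Y]$, so that the bracket expression in the statement is well posed.
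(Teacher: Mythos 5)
Your proof is correct, and I verified the crux computation: with $\widehat W,\widehat Y$ the fundamental (Killing) fields, the Koszul formula plus $\mathcal{L}_{\widehat W}g=\mathcal{L}_{\widehat Y}g=0$ and $[\widehat W,\widehat Y]=-\widehat{[W,Y]}$ indeed collapse to $(\nabla_{\widehat W}\widehat W,\widehat Y)_o=-([W,Y]_{\mathfrak m},W_{\mathfrak m})$, so with $W=X+Z$ the orbit is a geodesic iff $([X+Z,Y]_{\mathfrak m},X)=0$ for all $Y\in\mathfrak m$, which combined with your localization step gives the lemma. The paper itself offers no proof (it quotes the result from \cite{KV}), and your argument is essentially the standard one from that source; the only nuance worth a half-sentence is that homogeneity of $\gamma$ a priori gives a set-orbit, so you should note that $|\widehat W|$ is constant along its own orbit, whence the orbit is an affinely parametrized geodesic and $W_{\mathfrak m}$ is proportional to $X$, after which rescaling $W$ (which leaves the criterion invariant) yields your normalization $W_{\mathfrak m}=X$.
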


For a given $X\in\mathfrak{m}$, the condition $([X+Z,Y]_{\mathfrak{m}},X) =0$ for
all $Y\in \mathfrak{m}$ means that the orbit of  $\exp\bigl((X+Z)t\bigr) \subset G$,
$t \in \mathbb{R}$, through the point $o=eH$ is a geodesic in $(M=G/H,g)$.
Note also that all orbits of an $1$-parameter isometry group, generated with a
Killing vector fields of constant length on a given Riemannian manifold,
are geodesics, see e.~g. \cite{BerNik7}.

An important class of geodesic orbit manifolds consists of weakly symmetric spaces,
introduced by A.~Selberg \cite{S}. A homogeneous Riemannian manifold  $(M = G/H, g)$ is a
{\it  weakly symmetric space} if any two points $p,q \in M$ can be interchanged with
an isometry $a \in G$ (this is a definition equivalent to the original one).
Note that a Riemannian manifold $(M,g)$ is a weakly symmetric space if and only if
it is homogeneous and for some (hence, every) point $x \in M$ and any reductive
decomposition (\ref{reductivedecomposition}) the following property holds:
for every $U\in \mathfrak{m}$ there is $s \in H$ such that $\Ad(s) (U)=-U$~\cite{Zi96}.
Note that every $G$-invariant Riemannian metric on $G/H$ (with the above property) makes
it a weakly symmetric space.

Weakly symmetric spaces $M= G/H$ have many interesting properties and  are closely related
with spherical spaces, commutative spaces, Gelfand pairs etc. (see papers
\cite{AV, BV1996, Yak, Zi96} and book \cite{W1} by J.A.~Wolf). The classification of
weakly symmetric reductive homogeneous  Riemannian spaces was  given by O.S.~Yakimova
\cite{Yak} on the base of the paper \cite{AV} (see also  \cite{W1}).

Let us recall that {\it weakly symmetric Riemannian manifolds are geodesic orbit}
by a result of J.~Berndt, O.~Kowalski, and L.~Vanhecke \cite{BKV}. The main motivation of
this paper was to reprove this result by alternative simple methods.

It should be noted that the full isometry group $\Isom (M,g)$ of a given Riemannian manifold
$(M,g)$ is a Lie group and the isotropy subgroup at any point $x\in M$ is compact
by the Myers--Steenrod theorem. Note also that by the Cartan theorem, a closed (abstract) subgroup
of a Lie group is a Lie subgroup, hence, Lie group itself,
but this is not true in general for non-closed subgroup, see details e.~g. in \cite{HilNeeb}.

All manifolds in this paper are supposed to be connected. For a smooth manifold $M$ and
$x\in M$, $M_x$ denotes the tangent space to $M$ at the point $x$.
For a smooth manifolds mapping $\psi : M \rightarrow N$, we denote by $T\psi$ its differential.

The structure of this paper is as follows.

In Section \ref{mr}, we prove in Theorem \ref{isomhomgeod} that a geodesic $\gamma$ in
a given smooth Riemannian manifold $(M,g)$ is homogeneous if the set (group) $G_{\gamma}$ of all
isometries in $(M,g)$, preserving $\gamma$ and its orientation, acts transitively on
$\gamma$. Recall that $\gamma$ is homogeneous if it is an orbit of a point $x\in \gamma$
under an 1-parameter Lie subgroup $\psi(t)$, $t\in \mathbb{R}$, of the full isometry
Lie group $\Isom(M,g)$ of $(M,g)$.
At first we give two alternative proofs of Proposition \ref{isomhomgeodn} which states
that $G_{\gamma}$ is a closed (hence, Lie) subgroup of $\Isom(M,g)$. The first proof of
this proposition uses some results from the theory of topological groups while the
second one applies the so-called {\it development of the geodesic} $\gamma$ in
$p^{-1}(\gamma)$ where $p:TM\rightarrow M$ is the canonical projection. After this it
is quite easy to prove Theorem \ref{isomhomgeod} and then Theorem~\ref{cor.weaksymgo}
stating that every weakly symmetric Riemannian space is geodesic orbit. At the
end of this section, we discuss briefly some known results on geodesics invariant
under distinguished isometry of $(M,g)$.

In Section \ref{cl}, the closure of a homogeneous geodesic $\gamma$ in $(M,g)$ and the
corresponding 1-parameter group $\psi$ in $\Isom(M,g)$ are investigated. It is proved
in Proposition \ref{prop.closopsub} that the closure of $\psi(\mathbb{R})$ in $\Isom(M,g)$
coincides with $\psi(R)$ or is isomorphic to a compact commutative Lie group (torus $T^k$)
for $k\geq 2$. By Theorem \ref{nontrisotr}, the same statement is true for~$\gamma$ (in
general case, it is possible that the dimension of the closure $T^k$ of $\psi(\mathbb{R})$
in $\Isom(M,g)$ is greater than the dimension of the closure $T^l$ of $\gamma(\mathbb{R})$
in $(M,g)$); if $l\geq 2$ then sectional curvatures of all 2-planes tangent to $T^l$,
calculated in $(M,g)$, are nonnegative. Then we present some examples of $T^{l}$, $l\geq 2$,
that are (respectively, are not) totally geodesic in $(M,g)$. At the end of this section
are given some references to papers containing interesting results on geodesics.

In Section \ref{ehg}, we present some examples of homogeneous geodesics $\gamma$ on Lie groups
$G$ with left invariant Riemannian metric $g$, among them such that the corresponding
torus $T^l$ for non-closed subset $\gamma(\mathbb{R})\subset (G,g)$ is (respectively,
is not) totally geodesic in $(G,g)$.

In Section \ref{sqm}, we study properties of a special quadratic mapping closely related to homogeneous geodesics in $(G,g)$.

\section{Main results}
\label{mr}

\begin{theorem}\label{isomhomgeod}
Let $(M,g)$ be a Riemannian manifold, and let $\gamma: \mathbb{R} \rightarrow M$ be a
geodesic parameterized with arc length. Suppose that for any $s\in \mathbb{R}$ there is an
isometry $\eta(s)\in \Isom(M,g)$, such that $\eta(s)(\gamma(t)) =\gamma(t+s)$ for all
$t \in \mathbb{R}$. Then the geodesic $\gamma$ is homogeneous, i.~e. an orbit of an
1-parameter isometry group.
\end{theorem}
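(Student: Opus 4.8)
The plan is to produce the desired $1$-parameter group inside the Lie group $N_G(K)/K$, where $G=\Isom(M,g)$ and $K$ is the group of isometries fixing $\gamma$ pointwise, and then to lift it back to $G$. By the Myers--Steenrod theorem $G$ is a Lie group, acting smoothly on $M$ and on the unit tangent bundle $T^1M$. Let $v=\dot\gamma(0)\in T^1M$ and set $K=\{\phi\in G:\phi(\gamma(t))=\gamma(t)\ \text{for all }t\in\mathbb{R}\}$. Since an isometry fixing $\gamma(0)$ together with the direction $v$ must fix the whole geodesic (uniqueness of geodesics), $K$ coincides with the stabilizer of the point $v\in T^1M$; in particular $K$ is a closed, hence Lie, subgroup of $G$, and so is its normalizer $N_G(K)$, making $Q:=N_G(K)/K$ a Lie group.

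First I would check that each $\eta(s)$ normalizes $K$ and that $s\mapsto\eta(s)K$ is a well-defined homomorphism. If $\phi\in K$, then $\eta(s)\phi\eta(s)^{-1}(\gamma(t))=\eta(s)\phi(\gamma(t-s))=\eta(s)(\gamma(t-s))=\gamma(t)$, so $\eta(s)\phi\eta(s)^{-1}\in K$ and thus $\eta(s)\in N_G(K)$. If $\eta'(s)$ is any other isometry with $\eta'(s)(\gamma(t))=\gamma(t+s)$, then $\eta(s)^{-1}\eta'(s)\in K$, so the coset $\bar\eta(s):=\eta(s)K\in Q$ does not depend on the choice of $\eta(s)$. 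Finally $\eta(s)\eta(s')$ and $\eta(s+s')$ both send $\gamma(t)$ to $\gamma(t+s+s')$, hence differ by an element of $K$, so $\bar\eta:\mathbb{R}\to Q$ is a group homomorphism with abelian image.

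The crux is to show that $\bar\eta$ is continuous, since the $\eta(s)$ are produced pointwise in $s$ and are not assumed to vary continuously. Here I would use that the orbit map $G/K\to T^1M$, $gK\mapsto g\cdot v$, is an injective immersion (its injectivity is exactly $K=\mathrm{Stab}_G(v)$) and that $N_G(K)/K\hookrightarrow G/K$ is an embedding. Composing, $Q$ is identified with an embedded subset of the orbit of $v$, and under this identification $\bar\eta(s)$ corresponds to the point $\eta(s)\cdot v=(\gamma(s),\dot\gamma(s))\in T^1M$. Since $s\mapsto(\gamma(s),\dot\gamma(s))$ is smooth, $\bar\eta$ is continuous, hence a continuous (and therefore smooth) $1$-parameter subgroup of $Q$ with generator $\bar X:=\bar\eta'(0)\in\mathfrak{q}$. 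I expect the verification that these maps are genuine embeddings, so that continuity in $T^1M$ pulls back to continuity in $Q$, to be the main technical obstacle; everything else is formal, and this is precisely what rescues the argument even when $\gamma(\mathbb{R})$ fails to be embedded in $M$.

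It then remains to lift $\bar\eta$. Let $q:N_G(K)\to Q$ be the quotient homomorphism and $dq:\mathfrak{n}\to\mathfrak{q}$ its differential, which is surjective. Choosing $X\in\mathfrak{n}$ with $dq(X)=\bar X$ and setting $\psi(s)=\exp(sX)$ gives a $1$-parameter subgroup of $G$ with $q(\psi(s))=\exp(s\,dq(X))=\exp(s\bar X)=\bar\eta(s)$, that is, $\psi(s)^{-1}\eta(s)\in K$ for every $s$. Consequently $\psi(s)(\gamma(t))=\eta(s)(\gamma(t))=\gamma(t+s)$ for all $t$; in particular $\psi(s)(\gamma(0))=\gamma(s)$, so the orbit of $\gamma(0)$ under the $1$-parameter group $\psi$ is exactly $\gamma(\mathbb{R})$. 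This shows that $\gamma$ is homogeneous and completes the proof.
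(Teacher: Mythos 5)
Your overall architecture is sound and genuinely different from the paper's, but there is a real gap at exactly the point you flag: the continuity of $\bar\eta$. You correctly note that the orbit map $G/K\to T^1M$, $gK\mapsto g\cdot v$, is an injective immersion, but an injective immersion is not an embedding, and continuity of $s\mapsto \eta(s)\cdot v=\dot\gamma(s)$ in $T^1M$ does not by itself give continuity of $s\mapsto \eta(s)K$ in the manifold topology of $G/K$ (compare the irrational winding of $\mathbb{R}$ in a torus, whose orbit topology is strictly finer than the subspace topology). Since without this step you only have a possibly discontinuous homomorphism $\mathbb{R}\to Q$, which need not lie on any one-parameter subgroup, the proof is incomplete as written. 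The missing ingredient is \emph{properness}: by the Myers--Steenrod/van Dantzig--van der Waerden theorem, $G=\Isom(M,g)$ acts properly on $M$, and properness lifts to $T^1M$, because for compact $C\subset T^1M$ the transporter $\{g\in G : g\cdot v\in C\}$ is a closed subset of the compact set $\{g\in G: g\cdot \pi(v)\in \pi(C)\}$, where $\pi:T^1M\to M$ is the projection. Concretely: if $s_n\to 0$, then $\eta(s_n)\cdot v=\dot\gamma(s_n)\to v$, so the $\eta(s_n)$ eventually lie in a compact subset of $G$, and every subsequential limit $g$ satisfies $g\cdot v=v$, i.e.\ $g\in K$; since $N_G(K)$ is closed, $\eta(s_n)\to g$ in $N_G(K)$ and hence $\bar\eta(s_n)\to e$ in $Q$. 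This gives continuity at $0$, hence everywhere by the homomorphism property, and then smoothness since continuous homomorphisms of Lie groups are smooth. With this inserted, the rest of your argument is correct: $K$ equals the stabilizer of $v$ by uniqueness of geodesics, each $\eta(s)$ normalizes $K$, $\bar\eta$ is well defined and a homomorphism, and the lift through the submersion $q:N_G(K)\to Q$ via $q\circ\exp=\exp\circ\, dq$ produces the desired one-parameter group $\psi$ with $\psi(s)(\gamma(0))=\gamma(s)$.

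For comparison, the paper takes a different route: it proves (Proposition \ref{isomhomgeodn}) that the full group $G_{\gamma}$ of isometries translating $\gamma$ is itself a Lie group, either by the Gleason--Iwasawa--Montgomery--Zippin theorem (an extension of the Lie group $\mathbb{R}$ by the compact kernel $G_{\gamma}(0)$ is a Lie group) or by a faithful representation of $G_{\gamma}$ into the isometry group of a Euclidean space obtained by trivializing the normal bundle of $\gamma$ via parallel transport; it then picks a one-parameter subgroup of $G_{\gamma}^0$ not contained in $G_{\gamma}(0)$. Your approach avoids proving that $G_{\gamma}$ is a Lie group at all, working from the start with closed subgroups $K\subset N_G(K)\subset G$ and trading the topological-group input for properness of the isometry action; once properness is made explicit it is arguably the shorter argument. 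Note that the difficulty the paper confronts (the geodesic may be dense in a torus, so $\gamma(\mathbb{R})$ need not be embedded) reappears in your proof precisely as the orbit-embedding question — and is resolved because the orbit of $v$ under the \emph{full} group $G$ is closed even when $\gamma(\mathbb{R})$ is not.
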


\begin{proof}
Let us consider
\begin{eqnarray}\label{eq.onepariso1}
G_{\gamma}(s)&:=&\{a\in \Isom(M,g)\, |\,  a(\gamma(t)) =\gamma(t+s)\, \forall t \in \mathbb{R}\}, \quad s\in \mathbb{R};\\
\label{eq.onepariso2}
G_{\gamma}&:=&\cup_{s\in \mathbb{R}}\, G_{\gamma}(s).
\end{eqnarray}
We know that $G_{\gamma}(s) \neq \emptyset$ for all $s\in \mathbb{R}$ and
$G_{\gamma}=\cup_{s\in \mathbb{R}}\, G_{\gamma}(s)$.
Clear that $G_{\gamma}(0)$ is compact, since it is the intersection of the isotropy
subgroups at all points of $\gamma(\mathbb{R})$ with respect to $\Isom(M,g)$.
It is obvious that $G_{\gamma}$ is a subgroup in $\Isom(M,g)$.

The most crucial step in this proof is to prove that $G_{\gamma}$ is a Lie group.
We prove this fact separately in Proposition \ref{isomhomgeodn} below.

\smallskip

Let $G_{\gamma}^0$ be the identity component of $G_{\gamma}$. Since $G_{\gamma}$
is a Lie group, $G_{\gamma}^0$ is also a Lie group, hence, a Lie (possibly, virtual) subgroup
in $\Isom(M,g)$. Therefore, for any $U\in \mathfrak{g}$, where $\mathfrak{g}$ is the Lie
algebra of the group $G_{\gamma}^0$, we get that $\exp(tU) \subset G_{\gamma}^0$.
Clearly, there are $U \in \mathfrak{g}$ and $t\in \mathbb{R}$ such that
$\exp(tU) \not\subset G_{\gamma}(0)$.
Hence, $\gamma(\mathbb{R})=\exp(tU)(\gamma(0))$, $t\in \mathbb{R}$, as required.
\end{proof}

\begin{proposition}\label{isomhomgeodn}
The group $G_{\gamma}$ defined with (\ref{eq.onepariso2}) is a Lie group.
\end{proposition}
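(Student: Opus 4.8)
The plan is to deduce the statement from Cartan's closed subgroup theorem, but applied to the \emph{closure} of $G_{\gamma}$ rather than to $G_{\gamma}$ itself. First I would set $v_0:=\gamma'(0)$ in the unit tangent bundle $T^1M$ and consider the smooth orbit map $\Phi\colon \Isom(M,g)\to T^1M$, $\Phi(a):=Ta(v_0)$, which is equivariant for the natural $\Isom(M,g)$-action on $T^1M$. Since an isometry carrying $v_0$ to $\gamma'(s)$ must map $\gamma$ onto $t\mapsto\gamma(t+s)$ (an isometry is determined on a geodesic by its initial point and unit velocity), one checks that $\Phi(a)=\gamma'(s)$ holds exactly when $a\in G_{\gamma}(s)$; hence $G_{\gamma}=\Phi^{-1}(O)$, where $O:=\{\gamma'(s):s\in\mathbb{R}\}$ is the orbit of $v_0$ under the geodesic flow. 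Moreover $K:=G_{\gamma}(0)$ is precisely the $\Isom(M,g)$-stabilizer $\Phi^{-1}(v_0)$ of $v_0$, so it is a closed, hence compact Lie, subgroup by Cartan's theorem.

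The essential difficulty is that $G_{\gamma}$ itself need not be closed in $\Isom(M,g)$: if $\gamma$ winds densely (for instance, an irrational line on a flat torus), then $G_{\gamma}$ is a dense one-parameter image whose closure is strictly larger, so Cartan's theorem cannot be applied to $G_{\gamma}$ directly. I would therefore pass to $L:=\overline{G_{\gamma}}$, a closed and hence Lie subgroup, and realize $G_{\gamma}$ as a \emph{virtual} (immersed) Lie subgroup of $L$. The key structural observation is that $K$ is normal in $L$. Indeed, for $a\in G_{\gamma}(s)$ the conjugate $aKa^{-1}$ is the $L$-stabilizer of $\Phi(a)=\gamma'(s)$; but any isometry fixing $\gamma(s)$ together with $\gamma'(s)$ fixes the whole geodesic, so this stabilizer equals $K$ for every $s$. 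Thus $G_{\gamma}$ normalizes $K$, and since the normalizer is closed it contains $L=\overline{G_{\gamma}}$. Consequently $L/K$ is a genuine Lie group, and it is abelian, because $G_{\gamma}/K$ is a homomorphic image of $\mathbb{R}$ under the shift map $a\mapsto s$ and is dense in $L/K$.

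Finally I would \emph{develop} the geodesic into this quotient. Because $\Isom(M,g)$ acts properly on $M$, and hence on $T^1M$ (Myers--Steenrod), the closed subgroup $L$ acts properly as well, so the orbit $O=L\cdot v_0$ is an embedded submanifold and the orbit map induces a diffeomorphism $L/K\cong O$. Under this identification the smooth curve $s\mapsto\gamma'(s)$ becomes a continuous map $\bar{\rho}\colon\mathbb{R}\to L/K$, and it is a group homomorphism since $a_{s+t}K=a_sa_tK$; being a continuous homomorphism into a Lie group it is a one-parameter subgroup, and its image is exactly $G_{\gamma}/K$, a virtual Lie subgroup of $L/K$. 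Pulling back by the quotient homomorphism $\pi\colon L\to L/K$, whose kernel $K$ is a compact Lie group, exhibits $G_{\gamma}=\pi^{-1}\bigl(\bar{\rho}(\mathbb{R})\bigr)$ as an extension of a one-parameter group by $K$; this is manifestly a Lie group and a virtual Lie subgroup of $\Isom(M,g)$, as required. The main obstacle, and the place where properness of the action and the normality of $K$ do the real work, is precisely this upgrade of the possibly non-closed set $G_{\gamma}$ to a bona fide immersed Lie subgroup.
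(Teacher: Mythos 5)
Your proof is correct in substance, and it takes a genuinely different route from the paper, which gives two arguments: (i) retopologize $G_{\gamma}$ via the injective map $a\mapsto(\eta(a),\varphi(a))\in\mathbb{R}\times O(M_{\gamma(0)})$ (shift part plus parallel-transport-normalized differential), so that $G_{\gamma}$ becomes a locally compact group extension of $\mathbb{R}$ by the compact $G_{\gamma}(0)$, and then invoke the Gleason--Iwasawa--Montgomery--Zippin theorem that such a topological-group extension of a Lie group by a Lie group is Lie; and (ii) develop $\gamma$ to represent $G_{\gamma}$ faithfully as a \emph{closed} subgroup of $\Isom(W)$ for the Euclidean space $W\cong\mathbb{R}\times W_{\gamma(0)}$ and apply Cartan there. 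You instead stay inside $\Isom(M,g)$: you pass to the closure $L=\overline{G_{\gamma}}$ (Lie by Cartan), prove that $K=G_{\gamma}(0)$ is normal in $L$ by the stabilizer-rigidity observation that the full isometric stabilizer of each $\gamma'(s)$ is exactly $K$ (this parallels the paper's remark in its second proof that $A(0)$ is normal in $\mathcal{A}$), and then realize the shift map as a continuous one-parameter subgroup $\bar\rho$ of the honest Lie group $L/K$, recovering $G_{\gamma}=\pi^{-1}\bigl(\bar\rho(\mathbb{R})\bigr)$. Your appeal to properness of the isometry action on $T^1M$ and the embedded-orbit theorem, which yields continuity of $\bar\rho$, replaces the paper's hands-on Hausdorff-distance estimate (\ref{hausd}) proved by a compactness/contradiction argument. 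What each buys: the paper's development proof is more self-contained (only parallel transport and Cartan in a Euclidean isometry group), whereas your argument leans on heavier ambient machinery (Myers--Steenrod, properness, orbit embeddings for proper actions) but avoids the deep topological-group extension theorem, since in your setup both the kernel $K$ and the quotient datum $\bar\rho(\mathbb{R})$ already sit inside genuine Lie groups.

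Two small points to tighten. First, you use $O$ for two different sets: the geodesic-flow orbit $\{\gamma'(s):s\in\mathbb{R}\}$, which is what is needed for $G_{\gamma}=\Phi^{-1}(O)$, and the orbit $L\cdot v_0$, which is what the properness argument identifies with $L/K$; when $G_{\gamma}$ is not closed, the former is a dense proper subset of the latter, so you should say that under the diffeomorphism $L/K\cong L\cdot v_0$ the curve $\{\gamma'(s)\}$ corresponds precisely to $\bar\rho(\mathbb{R})$. Second, the closing claim that the extension is ``manifestly a Lie group'' deserves one more line: an abstract extension of $\mathbb{R}$ by a compact group carries no canonical Lie structure by itself, and with the subspace topology from $L$ the group $G_{\gamma}$ may fail to be locally compact. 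The ambient smooth structure rescues this, e.g.\ via the fiber product: $\widehat{G}=\{(t,l)\in\mathbb{R}\times L:\bar\rho(t)=\pi(l)\}$ is a closed subgroup of $\mathbb{R}\times L$, hence a Lie group by Cartan, and when $\gamma$ is injective the projection $(t,l)\mapsto l$ is an injective homomorphism onto $G_{\gamma}$, exhibiting it as a virtual Lie subgroup; when $\gamma$ is periodic, $\bar\rho(\mathbb{R})$ is a point or a circle, hence closed, and Cartan applies directly to $G_{\gamma}=\pi^{-1}\bigl(\bar\rho(\mathbb{R})\bigr)$ --- this recovers, inside your framework, the paper's separate treatment of the non-injective case.
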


\begin{proof}
If $\gamma$ is non-injective then the properties of $G_{\gamma}$ (see (\ref{eq.onepariso1})
and (\ref{eq.onepariso2})) imply that $\gamma$ is
periodic i.~e. there is the smallest $s>0$  such that  $\gamma(t+s)=\gamma (t)$ for all
$t \in \mathbb{R}$ (recall that the motion of a point $\gamma(t)$ along $\gamma$ under
the action of $G_{\gamma}$ depends essentially only on its position in $M$,
but not on the value of $t$).
Then $G_{\gamma}$ is compact, hence, a Lie subgroup in $\Isom(M,g)$.
\smallskip

From this point we suppose that $\gamma$ is injective. In this case we give two proofs of
the fact that $G_{\gamma}$ is a Lie group.

{\it The first proof.}
Let us supply the group $G_{\gamma}$ with {\it the natural topology}. Define the natural projection
$$
\eta: G_{\gamma} \mapsto \mathbb{R},
$$
as follows: $\eta(a)=s$ if and only if $a(\gamma(t))=\gamma (t+s)$ for all
$t \in \mathbb{R}$. Clearly, such $s$ is unique.
Let $\Pi_s$ the parallel transport  of $M_{\gamma(0)}$ to $M_{\gamma(s)}$ along
$\gamma$ with respect to the Levi-Civita connection of the Riemannian manifold $(M,g)$.
Consider any $a\in G_{\gamma}$ and put $s=\eta(a)$. Let us define the map
$a\in G_{\gamma} \mapsto \varphi(a)\in O(M_{\gamma(0)})$ as follows:
$$
\varphi(a)=  \Pi_s^{-1} \circ (T\, a)_{\gamma(0)}.
$$
Let us supply $G_{\gamma}$ with the topology  induced with the product topology on the space
$\mathbb{R} \times O(M_{\gamma(0)})$ under the mapping
$a\in G_{\gamma} \mapsto (\eta(a),\varphi(a))\in \mathbb{R} \times O(M_{\gamma(0)})$,
that is obviously injective. This topology makes $G_{\gamma}$ a locally compact topological group.
\smallskip

It is clear that $\eta(a_1\cdot a_2)=\eta(a_1)+\eta(a_2)$ for $a_1, a_2 \in G_{\gamma}$.
Since $\eta$ is an open surjective homomorphism of topological groups,
then $\operatorname{Ker} (\eta)$ is a normal subgroup in $G_{\gamma}$. If fact,
$\operatorname{Ker}(\eta)= G_{\gamma}(0)$ is compact.
The following important result  is known in the theory of topological groups:
If $G$ is a topological group and $H$ is a closed invariant subgroup of $G$
such that $H$ and $G/H$ are Lie groups, then $G$ is a Lie group
(see Theorem 1 in \cite{Gleason1949}, Theorem 7 in \cite{Iwasawa1949}, or
pp.~153--154 in \cite{MontZip1955}).
Since $\mathbb{R}=G_{\gamma}/G_{\gamma}(0)$, where $\mathbb{R}$ and $G_{\gamma}(0)$ are Lie groups,
then  $G_{\gamma}$ is a Lie group by the above result.
\smallskip

{\it The second proof.}
We will use the development of the geodesic $\gamma$ in $p^{-1}(\gamma)$ where
$p:TM\rightarrow M$ is the canonical projection.

Let $\Pi_{t',\,t}$ be the parallel transport of $M_{\gamma(t)}$ to $M_{\gamma(t')}$ along $\gamma$
with respect to the Levi-Civita connection of the Riemannian manifold $(M,g)$.
Obviously, $\Pi_{t',\, t}\gamma\,'(t)=\gamma\,'(t')$. Similar statement is valid for all $T\psi$,
$\psi\in G_{\gamma}$.

Let $W=\cup_{t\in \mathbb{R}}W_{\gamma(t)}$,
$W_{\gamma(t)}=\{w\in M_{\gamma(t)}: w\perp \gamma\,'(t)\}$.
Choose any orthonormal basis $e_2,\dots, e_n$ in $W_{\gamma(0)}$ and define a basis
$e_2(t),\dots, e_n(t)$ of $W_{\gamma(t)}$ by equalities $e_j(t)=\Pi_{t,0}(e_j)$, $j=2,\dots, n$.
Then $W$ is a smooth vector bundle over $\gamma$ (the topology on $\gamma$ is defined
with the parameter $t$) with the restriction of $p$ to $W$, and $\Pi_{t',t}$, $T\psi$, where
$\psi\in G_{\gamma}$, are smooth linear isomorphisms on $W$. In addition, for any
$t,t',s\in \mathbb{R}$ and $\psi\in G_{\gamma}(s)$,
$T\psi_{\gamma(t')}\circ\Pi_{t',\,t}=\Pi_{t'+s,\,t+s}\circ T\psi_{\gamma(t)}$.
Since the matrix of any parallel transport $\Pi_{r',\,r}$ with respect to bases
$e_{2}(r),\dots,e_n(r)$ and $e_{2}(r'),\dots,e_n(r')$ is always the unit matrix, this
implies that the matrix of
$T\psi_{\gamma(t)}$ with respect to bases $e_{2}(t),\dots,e_n(t)$ and
$e_{2}(t+s),\dots,e_n(t+s)$ coincides with the matrix of
$T\psi_{\gamma(t')}$ with respect to bases $e_{2}(t'),\dots,e_n(t')$
and $e_{2}(t'+s),\dots,e_n(t'+s)$. Denote this matrix, which is independent on $t$, by $(\psi)$.

The above argument shows that any element $\psi\in G_{\gamma}(s)$ and the action of $T\psi$ on
$W$ are uniquely defined with the pair $(s,(\psi))$. In this notation,
the product in the group $T\psi$, $\psi\in G_{\gamma}$, is written as
\begin{equation}
\label{actmatr}
(s,(\psi))(s\,',(\psi'))=(s+s\,',(\psi)(\psi')).
\end{equation}
Thus, if $A(s)=\{(\psi): \psi\in G_{\gamma}(s)\}$ then
\begin{equation}
\label{as}
A(s+s')=A(s)A(s')=A(s')A(s), \quad A(-s)=A(s)^{-1}, \quad A(s)=A_sA(0)
\end{equation}
for $s, s'\in \mathbb{R}$, $A_s\in A(s)$.
Then $A(s)$ is compact for any $s\in \mathbb{R}$, since $A(0)$ is compact,
and $A(-s)=A(0)^{-1}A^{-1}=A(0)A^{-1}$. The last equalities implies that $A(s)=AA(0)=A(0)A$ for
any $A\in A(s)$ and $s\in \mathbb{R}$ and $A(s+t)=A_sA(t)$ for any $A_s\in A(s)$ and
$s,t\in \mathbb{R}$. In particular, $A(0)$ is a compact normal subgroup
of the group $\mathcal{A}=\{A(s), s\in \mathbb{R}\}\subset O(n-1)$.

Let $d$ be the intrinsic metric on $(M,g)$, $\delta$ any bi-invariant metric on the
group $O(n-1)$, whose restriction to $SO(n-1)$ coincides with the intrinsic metric defined with a
bi-invariant Riemannian metric on $SO(n-1)$ and $\delta_H$ the corresponding Hausdorff metric
on the family of compact subsets in $O(n-1)$. Note that
$\delta_H(A(s),A(0))=\delta(A_s,A(0))=\delta(A_0,A(s))$ for any $A_s\in A(s)$ and $A_0\in A(0)$
because of the last equality in (\ref{as}) and the right invariance of the metric $\delta$.
We state that $\delta_H(A(s),A(0))\rightarrow 0$
when $s\neq 0$ and $s\rightarrow 0$. Otherwise, there is a sequence
$\psi_{s_n}\in G_{\gamma}(s_n)$ such that $s_n\neq 0$,
$s_n\rightarrow 0$, $\delta((\psi_{s_n}),A(0))>\varepsilon$ for some
$\varepsilon >0$ and all $n\in \mathbb{N}$, and $d(\psi_{s_n}(x),\psi(x))\rightarrow 0$ for some
$\psi\in \Isom(M,g)$ uniformly for $x\in B(\gamma(0),r)$, where $0< r < \infty$. Then
$\psi \in G_{\gamma}(0)$ but $(\psi)\notin A(0)$, a contradiction. Therefore, for any
$s_0\in \mathbb{R}$ and $A_{s_0}\in A(s_0)$,
\begin{equation}
\label{hausd}
\delta_H\bigl(A(s_0+s),A(s_0)\bigr)=\delta_H\bigl(A_{s_0}A(s),A_{s_0}A(0)\bigr)=\delta_H\bigl(A(s),A(0)\bigr)\rightarrow 0
\end{equation}
if $s\neq 0$ and $s\rightarrow 0$.

The orthonormal bases $e_2(t),\dots, e_n(t)$ in $W_{\gamma(t)}$ permit to consider
$W=\cup_{s}W_{\gamma(s)}$ as a direct product $\mathbb{R}\times W_{\gamma(0)}$ and
supply the last manifold by the direct product of the standard Riemannian metrics
on its factors. Then $W$ is isometric to $n$-dimensional Euclidean space. If
$\psi\in G_{\gamma}(s)$ we define the Euclidean motion in $W$ by the formula
\begin{equation}
\label{actionG}
\psi(t,w)=\bigl(s+t,(\psi)w\bigr),
\end{equation}
where the vector $w\in W_{\gamma(0)}$ is considered as a vector-column with components in the base
$e_2(0),\dots, e_n(0)$.

In consequence of (\ref{hausd}) and compactness of sets $A(s)\subset O(n-1)$, the correspondence
$\psi\in G_{\gamma}(s)\rightarrow (s,(\psi))$ and formulae (\ref{actmatr}), (\ref{actionG}) give
the exact representation of $G_{\gamma}$ as a closed, hence a Lie, subgroup of $\Isom(W)$ for
$n$-dimensional Euclidean space $W$.
\end{proof}
\smallskip

Theorem \ref{isomhomgeod} implies a new proof of the following important result, that was obtained
in~\cite{BKV} using other methods.

\begin{theorem}[J.~Berndt--O.~Kowalski--L.~Vanhecke, \cite{BKV}]\label{cor.weaksymgo}
Every weakly symmetric Riemannian space $(M,g)$ is geodesic orbit.
\end{theorem}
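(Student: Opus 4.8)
The plan is to deduce Theorem~\ref{cor.weaksymgo} directly from Theorem~\ref{isomhomgeod} by verifying that every geodesic of a weakly symmetric space satisfies the transitivity hypothesis of that theorem. Recall from the excerpt that $(M=G/H,g)$ is weakly symmetric precisely when any two points $p,q\in M$ can be interchanged by some isometry $a\in G$. Thus I would fix an arbitrary geodesic $\gamma:\mathbb{R}\to M$ parameterized by arc length and, for each $s\in\mathbb{R}$, produce an isometry $\eta(s)\in\Isom(M,g)$ with $\eta(s)(\gamma(t))=\gamma(t+s)$ for all $t$. Once such $\eta(s)$ exists for every $s$, Theorem~\ref{isomhomgeod} immediately gives that $\gamma$ is homogeneous; since $\gamma$ was arbitrary, $(M,g)$ is geodesic orbit, which is the assertion.

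\smallskip

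First I would construct the interchanging isometry at the right midpoint. Fix $s\in\mathbb{R}$ and consider the two points $p=\gamma(s/2-r)$ and $q=\gamma(s/2+r)$ for a parameter $r$; in the simplest case take $p=\gamma(0)$ and $q=\gamma(s)$. By the weakly symmetric property there is an isometry $a\in G$ with $a(p)=q$ and $a(q)=p$. The key point is that such a point-interchanging isometry $a$ must map the unique minimizing (more precisely, the whole) geodesic through $p$ and $q$ to itself, reversing its orientation and fixing the midpoint $m=\gamma(s/2)$. Concretely, $a$ sends the geodesic segment from $p$ to $q$ to the segment from $q$ to $p$, so on $\gamma$ it acts as the reflection $t\mapsto s-t$ about the midpoint parameter $s/2$: that is, $a(\gamma(t))=\gamma(s-t)$.

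\smallskip

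Next I would compose two such reflections to obtain a translation. Let $a_0$ be the isometry realizing the reflection about $\gamma(0)$, so $a_0(\gamma(t))=\gamma(-t)$, and let $a_{s/2}$ be the reflection about $\gamma(s/2)$, so $a_{s/2}(\gamma(t))=\gamma(s-t)$. Then the composition $\eta(s):=a_{s/2}\circ a_0$ satisfies
\[
\eta(s)(\gamma(t))=a_{s/2}(\gamma(-t))=\gamma\bigl(s-(-t)\bigr)=\gamma(t+s)
\]
for all $t\in\mathbb{R}$, which is exactly the hypothesis of Theorem~\ref{isomhomgeod}. Since $\eta(s)$ lies in $\Isom(M,g)$ for every $s$, the theorem applies and the proof concludes.

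\smallskip

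The main obstacle I anticipate is justifying rigorously that a point-interchanging isometry $a$ acts on the geodesic $\gamma$ as the reflection $t\mapsto s-t$, rather than merely swapping the two chosen points while moving the rest of $\gamma$ off itself. I would handle this by working infinitesimally at the fixed midpoint: an isometry interchanging $p=\gamma(s/2-\epsilon)$ and $q=\gamma(s/2+\epsilon)$ for all small $\epsilon$ fixes $m=\gamma(s/2)$ and, by uniqueness of geodesics and the fact that isometries commute with the exponential map, must send $\gamma'(s/2)$ to $-\gamma'(s/2)$; hence it maps $\gamma$ to itself reversing orientation about $m$. The isometry $a$ itself need not be a one-parameter subgroup element, but Theorem~\ref{isomhomgeod} only requires existence of the translating family $\eta(s)$, so no further structure on the individual $a$'s is needed.
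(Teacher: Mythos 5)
Your proposal is correct and takes essentially the same route as the paper: the paper likewise produces, for each point $p\in\gamma(\mathbb{R})$, an isometry acting as a nontrivial involution on $\gamma(\mathbb{R})$ fixing $p$ (citing \cite{Zi96} for its existence), composes the reflections at $\gamma(0)$ and $\gamma(s/2)$ to obtain the translating isometry $\eta(s)$, and invokes Theorem~\ref{isomhomgeod}. The only difference is that you derive the reflection property directly from the point-interchanging definition rather than citing it, and your local fix is sound provided it is read as fixing one sufficiently small $\epsilon$ (so that the segment of $\gamma$ is the unique minimizing geodesic between $\gamma(s/2-\epsilon)$ and $\gamma(s/2+\epsilon)$); a single such $\epsilon$ suffices, since the resulting isometry fixes $\gamma(s/2)$, reverses $\gamma'(s/2)$, and hence reflects the whole geodesic --- one does not need, and weak symmetry does not supply, one isometry working for all small $\epsilon$ simultaneously.
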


\begin{proof}
Let us fix a geodesic $\gamma: \mathbb{R} \rightarrow M$ in a weakly symmetric Riemannian
manifold $(M,g)$. For any $p \in \gamma(\mathbb{R})$, there is a non-trivial isometry
$\eta(p)\in \Isom( M,g)$ that is a nontrivial involution on $\gamma(\mathbb{R})$ fixing the point
$p \in \gamma(\mathbb{R})$ (see e.~g.~\cite{Zi96}).

For a given $s\in \mathbb{R}$, the isometry $\psi(s):=\eta(\gamma(s/2))\circ \eta(\gamma(0))$
preserves $\gamma(\mathbb{R})$ and its orientation, and moves
the point $\gamma(0)$ to $\gamma(s)$. Therefore,
the geodesic $\gamma$ is homogeneous by Theorem \ref{isomhomgeod}.
\end{proof}

\bigskip

Let $(M,g)$ be a Riemannian manifold, and let $\gamma: \mathbb{R} \rightarrow M$ be a
geodesic parameterized with arc length.
The geodesic $\gamma$ is called {\it invariant under the isometry} $a\in \Isom(M,g)$, if
there is $\tau \in \mathbb{R}$ such that $a(\gamma(t)) =\gamma(t+\tau)$ for all $t \in \mathbb{R}$.

If $\gamma$ is a homogeneous geodesic, then, according to Proposition \ref{isomhomgeodn},
the isometry group $a\in \Isom(M,g)$ such that $\gamma$ is invariant under $a$, is a Lie group
$G_{\gamma}$ defined with (\ref{eq.onepariso2}). Moreover, by the proof of
Proposition \ref{isomhomgeodn}, $G_{\gamma}= G_{\gamma}(0) \times \mathbb{R}$, where
$$G_{\gamma}(0)=\{a\in \Isom(M,g)\,|\, a(\gamma(t)) =\gamma(t)\, \forall t \in \mathbb{R}\}.$$

Geodesics invariant under a distinguished isometry are studied in various papers,
see e.~g. \cite{Grove1974, Bangert2016} and references therein.
In particular, K.~Grove proved the following result.

\begin{theorem}[\cite{Grove1974}]
If $M$ is compact and the isometry  $A\in \Isom(M,g)$ has a non-closed invariant geodesic
then there are uncountably many $A$-invariant geodesics on $M$.
\end{theorem}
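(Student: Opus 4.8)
The plan is to prove the theorem of K.~Grove by a compactness-plus-density argument, leveraging the structure of the isometry group of a single invariant geodesic that we have already developed. The starting point is the isometry $A$ together with its given non-closed invariant geodesic $\gamma$, for which $A(\gamma(t)) = \gamma(t+\tau)$ for some $\tau \neq 0$ (if $\tau = 0$ the geodesic would be pointwise fixed and hence closed, contradicting non-closedness). First I would consider the cyclic group generated by $A$ and pass to its closure $\overline{\langle A \rangle}$ in the compact Lie group $\Isom(M,g)$ (compact because $M$ is compact). Since $\gamma$ is non-closed, the orbit $\{A^n(\gamma(0)) : n \in \mathbb{Z}\} = \{\gamma(n\tau)\}$ is infinite and its closure is an infinite compact set, so $\overline{\langle A \rangle}$ is a nontrivial closed abelian subgroup, hence isomorphic to a torus $T^k$ times a finite group, with $k \geq 1$.

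The key step is to produce, for each element $b$ in the identity component of $\overline{\langle A \rangle}$, an invariant geodesic, and then to show that uncountably many of these are geometrically distinct. The plan is as follows. Because $A \in \overline{\langle A \rangle} = \mathcal{T}$, and $\mathcal{T}$ is a torus $T^k$ with $k \geq 1$, the subgroup $\{A^n\}$ is dense in $\mathcal{T}$. I would then argue that the relation $A(\gamma(t)) = \gamma(t+\tau)$ passes to the closure: for any $b \in \mathcal{T}$ obtained as a limit $b = \lim_{j} A^{n_j}$, continuity of the $\Isom(M,g)$-action gives $b(\gamma(t)) = \lim_j \gamma(t + n_j \tau)$. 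The crucial point is that each such limit, when it exists along the geodesic, again moves $\gamma(t)$ to a point of the form $\gamma(t + s)$ for a well-defined $s$ depending only on $b$; this is exactly the situation governing $G_{\gamma}$ from Proposition~\ref{isomhomgeodn}, and it shows $\mathcal{T}$ (or at least its natural image) sits inside $G_{\gamma}$, which is the $1$-parameter flow generating $\gamma$.

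Once $\gamma$ is recognized as homogeneous via Theorem~\ref{isomhomgeod}, so that $\gamma(\mathbb{R}) = \{\psi(t)(\gamma(0))\}$ for a $1$-parameter group $\psi(t) \subset \mathcal{T}$ with non-closed image, the torus $\mathcal{T}$ has dimension $k \geq 2$ (a single circle would give a closed orbit). I would then take a second independent $1$-parameter subgroup $\phi(u) \subset \mathcal{T}$ transverse to $\psi$, and for each fixed $u$ consider the curve $t \mapsto \psi(t)(\phi(u)(\gamma(0)))$. Each such curve is an orbit of the $1$-parameter group $\psi(t)$ through the shifted point $\phi(u)(\gamma(0))$, hence is again a geodesic invariant under $A$ (since $A$ commutes with everything in the abelian $\mathcal{T}$, it permutes these orbits). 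Distinct values of $u$ modulo the periods produce geometrically distinct geodesics because the points $\phi(u)(\gamma(0))$ trace out a nondegenerate subtorus, giving uncountably many of them.

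The main obstacle I anticipate is the careful verification that passing to limits $A^{n_j} \to b$ genuinely yields an isometry carrying $\gamma$ onto itself as a reparametrization, rather than onto a merely nearby or transversal geodesic; a priori a limiting isometry could displace $\gamma$ off itself. This is precisely where non-closedness and the density of $\{n\tau \bmod (\text{period})\}$ must be exploited, together with the fact that an isometry is determined by its $1$-jet at one point, to conclude that the limit preserves the unit tangent field along $\gamma$ and therefore preserves $\gamma$ itself. Establishing that the resulting family of invariant geodesics is genuinely uncountable (and not collapsing to finitely many under the finite component group of $\overline{\langle A\rangle}$) is the second delicate point, handled by the dimension count $k \geq 2$ forced by non-closedness.
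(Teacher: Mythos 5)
There is a genuine gap, and it sits exactly where you flagged your ``main obstacle'': the claim that every limit $b=\lim_j A^{n_j}\in\mathcal{T}=\overline{\langle A\rangle}$ moves $\gamma(t)$ to a point of the form $\gamma(t+s)$ is false, and no repair via $1$-jets can save it, because the conclusion itself fails. Since $\gamma$ is non-closed, $\gamma(\mathbb{R})$ is not a closed subset of $M$, and the limits $\lim_j\gamma(t+n_j\tau)$ generically lie in $\overline{\gamma(\mathbb{R})}\setminus\gamma(\mathbb{R})$; a limiting isometry $b$ carries $\gamma$ onto a \emph{different} geodesic inside the closure, not onto a reparametrization of $\gamma$. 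Hence $\mathcal{T}\not\subset G_{\gamma}$, and Theorem~\ref{isomhomgeod} cannot be invoked: its hypothesis demands an isometry realizing the shift by \emph{every} $s\in\mathbb{R}$, whereas $A$ supplies only the discrete shifts $s\in\tau\mathbb{Z}$. Indeed, an $A$-invariant geodesic need not be homogeneous at all --- this is precisely the situation of Bangert's theorem quoted immediately after Grove's in the paper, where the closure torus lives in $TM$ and is not an orbit of a $1$-parameter isometry group. Everything downstream (the subgroup $\psi(t)\subset\mathcal{T}$ generating $\gamma$, the dimension count $k\geq2$, the transverse subgroup $\phi$) depends on this unestablished homogeneity. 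A secondary error: your dismissal of $\tau=0$ is wrong, since a pointwise fixed geodesic is not thereby closed; that case must instead be handled by noting that $\gamma$ lies in the fixed point set of $A$, a compact totally geodesic submanifold whose relevant component has dimension $\geq2$ (a $1$-dimensional component is a circle, forcing $\gamma$ closed), so all of its uncountably many geodesics are $A$-invariant. (For the record: the paper does not prove this theorem; it quotes it from \cite{Grove1974}, so there is no in-paper proof to match.)

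The salvageable core of your argument is the commutativity of $\mathcal{T}$, which makes homogeneity unnecessary: for \emph{every} $b\in\mathcal{T}$ the translate $b\circ\gamma$ is already an $A$-invariant geodesic, since $A(b(\gamma(t)))=b(A(\gamma(t)))=b(\gamma(t+\tau))$. The actual work is then distinctness, not invariance. Since $\Isom(M,g)$ is a compact Lie group, the infinite closed subgroup $\mathcal{T}$ has identity component a torus $T^k$ with $k\geq1$. If only countably many of the images $b(\gamma(\mathbb{R}))$, $b\in\mathcal{T}$, were distinct, then by a Baire category argument the subgroup of $\mathcal{T}$ preserving $\gamma(\mathbb{R})$ would be non-meager, hence open, hence contain $T^k$; the resulting shift map $T^k\to\mathbb{R}$ is a continuous homomorphism with compact image, hence trivial, so $T^k$ would fix $\gamma$ pointwise; but $A^m\in T^k$ for some $m\geq1$ (finitely many components), giving $\gamma(t+m\tau)=A^m(\gamma(t))=\gamma(t)$ for all $t$, i.e.\ $\gamma$ closed --- a contradiction when $\tau\neq0$. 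This route produces the uncountable family directly from $\mathcal{T}$ and the abelian structure, without ever asserting that $\gamma$ itself is an orbit of a $1$-parameter group.
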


It should be noted also the following recent result by V.~Bangert.

\begin{theorem}[\cite{Bangert2016}]
Let $\gamma$ be a non-closed and bounded geodesic in a complete Riemannian
manifold $(M,g)$ and assume that $\gamma$ is invariant under an isometry $A$ of
$(M,g)$, but is not contained in the set of fixed points of $A$. Then for some $k\geq 2$,
the geodesic line flow $\gamma'$ corresponding to $\gamma$ is dense in
a $k$-dimensional torus $T^k$ embedded in $TM$ and, in particular, every geodesic with
initial vector in $T^k$ is $A$-invariant.
\end{theorem}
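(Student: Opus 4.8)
The plan is to pass to the unit tangent bundle and study the orbit closure of the velocity field under the geodesic flow. Write $v=\gamma'(0)\in T^1M$, let $\phi^t\colon T^1M\to T^1M$ denote the geodesic flow (so that $\phi^t(v)=\gamma'(t)$), and set $\Gamma:=\overline{\{\phi^t(v):t\in\mathbb{R}\}}$. Since $\gamma$ is bounded and $(M,g)$ is complete, $\overline{\gamma(\mathbb{R})}$ is compact by the Hopf--Rinow theorem, so $\Gamma$ is a compact subset of the unit sphere bundle over $\overline{\gamma(\mathbb{R})}$; by construction the orbit of $v$ is dense in $\Gamma$, and $\Gamma$ is connected and $\phi^t$-invariant. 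The isometry $A$ satisfies $A(\gamma(t))=\gamma(t+\tau)$ with $\tau\neq 0$ (here the hypothesis $\gamma\not\subset\mathrm{Fix}(A)$ enters: $\tau=0$ would force $A$ to fix $\gamma$ pointwise). Differentiating gives $TA(\gamma'(t))=\gamma'(t+\tau)$, i.e. $TA(v)=\phi^\tau(v)$; since $A$ is an isometry, $TA$ commutes with $\phi^t$ and preserves the Sasaki metric of $T^1M$, so $TA(\phi^t v)=\phi^{t+\tau}(v)=\phi^\tau(\phi^t v)$. Thus $TA$ and $\phi^\tau$ agree on the dense orbit of $v$, whence $TA|_\Gamma=\phi^\tau|_\Gamma$.

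The crucial step is to deduce that the flow $\{\phi^t|_\Gamma\}$ is equicontinuous. For every $n\in\mathbb{Z}$ the map $\phi^{n\tau}|_\Gamma=T(A^n)|_\Gamma$ is the restriction to the invariant set $\Gamma$ of the Sasaki-isometry $T(A^n)$, hence an isometry of $\Gamma$. Writing $t=s+n\tau$ with $s\in[0,\tau)$ and using $\phi^t=\phi^{n\tau}\circ\phi^s$, I obtain, for $x,y\in\Gamma$,
\[
d(\phi^t x,\phi^t y)=d\bigl(\phi^{n\tau}(\phi^s x),\phi^{n\tau}(\phi^s y)\bigr)=d(\phi^s x,\phi^s y),
\]
so $t\mapsto d(\phi^t x,\phi^t y)$ is $\tau$-periodic. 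Since $(s,x)\mapsto\phi^s(x)$ is uniformly continuous on the compact set $[0,\tau]\times\Gamma$, the family $\{\phi^s\}_{s\in[0,\tau]}$ is equicontinuous, and by the periodicity so is the whole family $\{\phi^t\}_{t\in\mathbb{R}}$. This is the heart of the argument and the only place where the isometry $A$ is genuinely needed: without it there is no reason for the time-$\tau$ map to be distance preserving on $\Gamma$.

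It remains to identify $\Gamma$. An equicontinuous flow on a compact metric space possessing a dense orbit is minimal and is conjugate to a Kronecker (translation) flow on a compact abelian topological group; thus $\Gamma$ carries the structure of a compact connected abelian group on which $\phi^t$ acts by a dense one-parameter subgroup. The main obstacle here is ruling out infinite-dimensional solenoids, since a general compact connected abelian group need not be a torus. This is resolved by finite-dimensionality: $\Gamma$ is embedded in the finite-dimensional manifold $TM$, so its covering dimension is finite, and a finite-dimensional compact connected abelian group is a torus $T^k$ (by Pontryagin duality its dual is a finitely generated torsion-free abelian group $\mathbb{Z}^k$). Finally $k\ge 2$: the case $k=0$ is impossible because the orbit of $v$ is infinite, and $k=1$ would make the translation flow on $T^1$ periodic, i.e. $\gamma$ closed, contrary to hypothesis.

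The last assertion is then immediate from $TA|_\Gamma=\phi^\tau|_\Gamma$. For any $w\in T^k=\Gamma$ let $\sigma_w$ be the geodesic with $\sigma_w'(0)=w$, so that $\sigma_w'(t)=\phi^t(w)$. Then $TA(\sigma_w'(t))=\phi^t(TA\,w)=\phi^t(\phi^\tau w)=\sigma_w'(t+\tau)$, and comparing base points gives $A(\sigma_w(t))=\sigma_w(t+\tau)$; hence every geodesic with initial vector in $T^k$ is $A$-invariant, as claimed.
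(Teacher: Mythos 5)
The paper itself states this theorem without proof, citing \cite{Bangert2016}, so your argument must be measured against Bangert's. Your reduction is sound for most of its length: passing to $v=\gamma'(0)$ and $\Gamma=\overline{\{\phi^t v\}}$, compactness via Hopf--Rinow, the observation that $\tau\neq 0$, the identity $TA|_\Gamma=\phi^\tau|_\Gamma$ on the dense orbit, equicontinuity of $\{\phi^t|_\Gamma\}$ from the Sasaki isometries $T(A^n)=\phi^{n\tau}|_\Gamma$, and the conclusion that $\Gamma$ is a compact connected abelian group carrying a Kronecker flow are all correct, as are the $k\geq 2$ count and the final $A$-invariance computation. The genuine gap is the step identifying $\Gamma$ with a torus: it is \emph{false} that a finite-dimensional compact connected abelian group is a torus, and false that its Pontryagin dual must be finitely generated. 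The dyadic solenoid $\Sigma_2=\varprojlim\bigl(S^1 \leftarrow S^1\bigr)$ (bonding maps $z\mapsto z^2$) is compact, connected, abelian, of covering dimension one, with dual $\mathbb{Z}[1/2]$ --- torsion-free of rank one but not finitely generated --- and it embeds in $\mathbb{R}^3$, hence in a manifold, and carries a minimal equicontinuous translation flow. So neither finite dimension nor embeddability in $TM$ rules out a solenoidal orbit closure; finite dimension only bounds the rank of the dual. What separates tori from solenoids is local connectedness, equivalently being a Lie group, and your purely topological-dynamical argument establishes neither.

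The missing idea --- and the one Bangert's proof runs on --- is to import the Lie structure from the Myers--Steenrod theorem rather than from abstract duality. Since $A^n(\gamma(0))=\gamma(n\tau)$ stays in the compact set $\overline{\gamma(\mathbb{R})}$ and isometries preserve distances, every orbit $\{A^n y\}$ is bounded, so by Arzel\`a--Ascoli the closure $H$ of $\{A^n:n\in\mathbb{Z}\}$ in $\Isom(M,g)$ is compact; being a closed subgroup of the Lie group $\Isom(M,g)$, it is a compact abelian \emph{Lie} group. The group $H\times\mathbb{R}$ acts smoothly on $TM$ by $(h,s)\cdot w=Th\bigl(\phi^s w\bigr)$, the factors commuting because differentials of isometries commute with the geodesic flow; writing $s=n\tau+s'$ with $s'\in[0,|\tau|]$ shows the orbit of $v$ is the continuous image of the compact set $H\times[0,|\tau|]$ and contains the dense $\phi$-orbit, hence equals $\Gamma$. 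Therefore $\Gamma\cong (H\times\mathbb{R})/S$ for a closed subgroup $S$ (the stabilizer, which acts trivially on $\Gamma$ by transitivity of the abelian action), i.e.\ $\Gamma$ is a compact connected abelian Lie group --- a torus --- and indeed a smoothly embedded orbit, which is stronger than the topological embedding your route would give. With this replacement for your duality step, the rest of your proof goes through unchanged.
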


\section{On the closure of a homogeneous geodesic}
\label{cl}

Now, we are going to discuss important properties of an arbitrary homogeneous geodesic
$\gamma$ on a given Riemannian manifold $(M,g)$.
The main object of our interest is the closure of a given homogeneous geodesic.
The following result is well known (see e.~g. \cite{Goto1971}), but we give a short
proof for the reader's convenience.

\begin{proposition}\label{prop.closopsub}
Let $\psi(s)=\exp(sU)$, $s\in \mathbb{R}$, be an 1-parameter group in a given Lie group~$G$, where $U$ is from $\mathfrak{g}$, the Lie algebra of $G$.
Then $\psi(\mathbb{R})$ is a connected abelian subgroup of $G$ and there are three
possibilities:

1) $\psi(\mathbb{R})$ is a closed subgroup of $G$, diffeomorphic to $\mathbb{R}$
($\psi$ is a diffeomorphism);

2) $\psi(\mathbb{R})$ is a closed subgroup of $G$, diffeomorphic to the circle
$S^1$ ($\psi$ is a covering map);

3) $\psi(\mathbb{R})$ is not closed subgroup of $G$, and its closure is a torus
$T^k$ of dimension $k\geq 2$.

\end{proposition}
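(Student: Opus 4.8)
The plan is to analyze the closure $\overline{\psi(\mathbb{R})}$ as a closed subgroup of the Lie group $G$, exploiting the fact that $\psi(\mathbb{R})$ is abelian so its closure is a closed connected abelian Lie subgroup. First I would verify the easy structural facts: since $\psi(s+s') = \exp((s+s')U) = \exp(sU)\exp(s'U) = \psi(s)\psi(s')$, the map $\psi$ is a homomorphism from $\mathbb{R}$ into $G$, so its image is a connected abelian subgroup. The core of the argument is then to identify the closure $K := \overline{\psi(\mathbb{R})}$ and split into the three cases according to whether $\psi$ is injective and whether its image is already closed.

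The main classification step is to apply the structure theory of compact connected abelian Lie groups. By the Cartan closed-subgroup theorem (cited in the excerpt), $K$ is a closed, hence embedded, Lie subgroup of $G$. Being the closure of an abelian group, $K$ is itself connected and abelian. The key observation is that $\psi(\mathbb{R})$ is dense in $K$ by construction, so $K$ is a connected abelian Lie group containing a dense one-parameter subgroup; in particular $K$ is monothetic (topologically generated by the flow of a single element). I would then split the analysis: if $\psi$ fails to be injective, there is a smallest period $s_0 > 0$ with $\psi(s_0) = e$, giving case 2 with image diffeomorphic to $S^1$ (and already compact, hence closed). If $\psi$ is injective and $\psi(\mathbb{R})$ is closed, then $\psi$ is a continuous injective immersion onto a closed subgroup, and one checks it is a diffeomorphism onto $\mathbb{R}$, giving case 1. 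The remaining case is $\psi$ injective but $\psi(\mathbb{R})$ not closed.

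In that remaining case, $K$ is strictly larger than $\psi(\mathbb{R})$ and is compact: a closed subgroup of $G$ that is abelian, connected, and contains a dense noncompact-looking but bounded orbit. The decisive fact I would invoke is that a compact connected abelian Lie group is a torus $T^k = \mathbb{R}^k/\mathbb{Z}^k$. I would argue $K$ is compact because otherwise its identity component would split off a vector factor $\mathbb{R}^m$ into which the dense one-parameter subgroup would have to map, but a line dense in $\mathbb{R}^m \times T^j$ forces $m \le 1$, and $m=1$ with density contradicts the failure of closedness in the way required; the clean route is the standard theorem that a connected abelian Lie group admitting a dense one-parameter subgroup and not equal to $\mathbb{R}$ or $S^1$ must be a torus $T^k$. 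Finally, to see $k \ge 2$: if $k = 1$ then $K = T^1 = S^1$ is itself compact and one-dimensional, forcing $\psi(\mathbb{R})$ to be either all of $S^1$ (closed, case 2) or a proper dense subgroup of $S^1$; but a one-parameter subgroup of a one-dimensional connected group is onto, so $\psi(\mathbb{R}) = S^1$ is closed, contradicting non-closedness. Hence $k \ge 2$.

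I expect the main obstacle to be the clean justification that $K$ is \emph{compact} (equivalently, a torus rather than a product with a nontrivial $\mathbb{R}$-factor). The subtlety is that $G$ itself may be noncompact, so boundedness of $\psi(\mathbb{R})$ is not automatic; one must argue intrinsically that the density of a single one-parameter subgroup in a connected abelian Lie group $K$ with $\dim K \ge 2$ forces $K$ to have no $\mathbb{R}$-factor. I would handle this by writing $K \cong \mathbb{R}^m \times T^j$ and noting that the projection of $\psi(\mathbb{R})$ to the $\mathbb{R}^m$ factor is a one-parameter subgroup of $\mathbb{R}^m$, whose image is a line (or a point); a single line cannot be dense in $\mathbb{R}^m$ unless $m \le 1$, and if $m = 1$ the line is already the whole $\mathbb{R}^m$ factor, so density on that factor together with closedness properties of $\mathbb{R}$ would contradict the assumption that $\psi(\mathbb{R})$ is not closed. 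This pins down $m = 0$, so $K = T^j$ is a torus, completing case 3.
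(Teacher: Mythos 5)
Your proposal is correct and takes essentially the same route as the paper: both identify $K=\overline{\psi(\mathbb{R})}$ as a connected abelian Lie group, rule out an $\mathbb{R}$-factor by projecting the dense one-parameter subgroup onto it, and conclude $K$ is a torus with $k\geq 2$ since $K\neq\psi(\mathbb{R})$. The one step you leave implicit --- the contradiction when $K\cong\mathbb{R}\times T^j$ --- is completed in the paper exactly as your sketch suggests: since $\pi\circ\psi$ is then an isomorphism of $\mathbb{R}$, parameters of any convergent sequence $\psi(t_n)$ stay in a compact interval $[-M,M]$, so the limit lies in $\psi([-M,M])\subset\psi(\mathbb{R})$, forcing $\psi(\mathbb{R})$ to be closed, a contradiction.
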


\begin{proof}
Clear that $K$, the closure of $\psi(\mathbb{R})$ in $G$, is a connected abelian Lie
group. If $K=\psi(\mathbb{R})$, then we get either the first
or the second possibility.
Suppose that $K\neq \psi(\mathbb{R})$. If $K$ is not a torus, then
$K= \mathbb{R}\times K_1$ for some abelian connected group $K_1$.
If $\pi:\mathbb{R}\times K_1 \rightarrow \mathbb{R}$ is the projection to the first
factor, then $\pi \circ \psi: \mathbb{R} \rightarrow  \mathbb{R}$
is a Lie group isomorphism. Now, consider any point $(a,b)\in \mathbb{R}\times K_1$.
There is a sequence
$(a_n,b_n)\in \psi(\mathbb{R})\subset \mathbb{R}\times K_1$ such that
$\lim\limits_{n\to\infty}(a_n,b_n)=(a,b)$.
It is clear that $(a_n,b_n)\in \psi([-M,M])$ for some positive $M\in \mathbb{R}$.
Indeed,  $a_n \to a$ as $n \to \infty$ and $\pi \circ \psi$ is a Lie group automorphism
of $\mathbb{R}$, hence, the set
$(\pi \circ \psi)^{-1}(a_n)=\psi^{-1}\bigl((a_n,b_n)\bigr)$, $n\in \mathbb{N}$, is
bounded. Since $\psi([-M,M])$ is compact, then $(a,b)\in \psi(\mathbb{R})$ and
$K=\psi(\mathbb{R})$ that impossible. Hence, $K$ is a torus $T^k$ of dimension $k\geq 1$.
Obviously, $K\neq \psi(\mathbb{R})$ implies $k\geq 2$.
\end{proof}
\smallskip

Now we consider the structure of the closure of homogeneous geodesics in Riemannian manifolds.
Let $(M,g)$ be a Riemannian manifold, and let $\gamma: \mathbb{R} \rightarrow M$ be a
geodesic parameterized with arc length, $\gamma(0)=x \in M$.
Suppose that $\gamma$ is homogeneous, i.~e. there is $U$ in the Lie algebra
$\mathfrak{g}$ of the Lie group $G=\Isom(M,g)$, such that
$\gamma(t)=\psi(t)(x)$, where $\psi(t)=\exp(Ut)$, $t\in \mathbb{R}$.
It is known that all orbits of any closed subgroup of $\Isom(M,g)$ on $M$ are
closed (see e.~g. Proposition 1 in \cite{Yau1977}). Therefore, for $\psi(\mathbb{R})$
closed in $G$, the geodesic $\gamma(\mathbb{R})$ is closed as the set in $M$. In case 2)
of Proposition \ref{prop.closopsub}, the geodesic $\gamma=\gamma(t)$ is periodic, but
it is possible also in case 3). For instance,  one can find in \cite{BerNik6, BerNik8}
several examples of Killing vector fields of constant length
that have both compact and non-compact integral curves
(such curves are homogeneous geodesics).
\medskip

Now, we assume that $\psi(\mathbb{R})$ is not closed in $G$.
The following result had been proved in \cite[Theorem 3.2]{Grove1974} for any complete
Riemannian manifold $(M,g)$  with compact isometry group $\Isom(M,g)$.
We prove a more general version using a similar approach.

\begin{theorem}\label{the.closhomgeod}
Let $(M,g)$ be a complete Riemannian manifold and let $\gamma: \mathbb{R} \rightarrow M$
be a homogeneous geodesic, i.~e. it is an orbit of some 1-parameter isometry group
$\psi(t)=\exp(Ut)$, $t\in \mathbb{R}$, for some $U \in \mathfrak{g}$, the Lie algebra of
the Lie group $G=\Isom(M,g)$.
Assume that $\gamma(\mathbb{R})$ is non-closed subset in $M$. Then
$\gamma(\mathbb{R})$ lies in a submanifold of $(M,g)$  diffeomorphic to a $l$-dimensional
torus $T^l$ with $l\geq 2$ and any
orbit of the group $\psi(t)$, $t\in \mathbb{R}$, through a point of $T^l$ is a geodesic
lying dense in $T^l$. Furthermore, the sectional curvature of any $2$-plane, tangent to
$T^l$ at a point $x\in T^l$ and containing $\gamma'(0)$, is nonnegative.
\end{theorem}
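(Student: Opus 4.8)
The plan is to reduce the statement to the abelian-group picture established in Proposition~\ref{prop.closopsub}. Let $K$ denote the closure of $\psi(\mathbb{R})$ in $G=\Isom(M,g)$. Since $\gamma(\mathbb{R})$ is assumed non-closed in $M$, the subgroup $\psi(\mathbb{R})$ cannot be closed in $G$ (for a closed subgroup would have closed orbits in $M$, by the cited Proposition~1 in \cite{Yau1977}); hence by Proposition~\ref{prop.closopsub} we are in case~3), so $K$ is a torus $T^k$ with $k\geq 2$. I would then consider the orbit map $\mu: K \to M$, $\mu(a)=a(x)$, whose image is the closure $\overline{\gamma(\mathbb{R})}$ in $M$. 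Because $K$ is compact and acts by isometries, this orbit $K(x)=\overline{\gamma(\mathbb{R})}$ is a compact embedded submanifold of $M$, diffeomorphic to the homogeneous space $K/K_x$, where $K_x=\{a\in K: a(x)=x\}$ is the (closed) isotropy subgroup. Since $K$ is a torus and $K_x$ is a closed subgroup, $K/K_x$ is again a torus $T^l$; and $l\geq 1$. The density of $\gamma(\mathbb{R})=\psi(\mathbb{R})(x)$ in $K(x)=\mu(K)=\mu(\overline{\psi(\mathbb{R})})$ is immediate from continuity of $\mu$ and density of $\psi(\mathbb{R})$ in $K$. To exclude $l=1$: if $l=1$ then $T^l$ is a circle, an orbit of the compact group $K$, and the $\psi$-orbit would be a dense one-parameter subgroup image inside a circle, forcing $\gamma(\mathbb{R})$ to be all of that circle and hence closed, contradicting the hypothesis. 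So $l\geq 2$.

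Next I would verify that every $\psi$-orbit through a point of $T^l$ is a geodesic dense in $T^l$. Density is the same argument as above applied at an arbitrary $y\in T^l$: the orbit $\psi(\mathbb{R})(y)$ is dense in $K(y)=T^l$. That each such orbit is a geodesic of $(M,g)$ is where I would invoke the Killing-field-of-constant-length observation recalled after Lemma~\ref{GO-criterion}: the vector field $X$ generated by $U$ is a Killing field, and along the original homogeneous geodesic $\gamma$ its length $\|X(\gamma(t))\|$ is constant and equals the speed of $\gamma$. Since the flow of $X$ is by isometries and preserves $T^l=K(x)$, and since $K$ is abelian so that $K$ acts transitively on $T^l$ commuting with $\psi$, the length $\|X(y)\|$ is constant over all of $T^l$. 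A Killing field whose length is constant along one of its integral curves has that curve as a geodesic; more directly, constancy of $\|X\|$ on $T^l$ means $X$ restricted to $T^l$ is a unit-speed (after rescaling) Killing field of constant length on the ambient manifold, and its orbits are geodesics of $(M,g)$ by the cited fact in \cite{BerNik7}.

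For the curvature claim I would use the standard formula for the sectional curvature in the direction of a Killing field. If $X$ is a Killing field, then for a unit vector $v$ orthogonal to $X(y)$ the quantity $g(R(X,v)v,X)$ at $y$ is controlled by $\|\nabla_v X\|^2$ minus a second-order term that vanishes when $\|X\|$ is constant; concretely, for a Killing field one has the identity
\begin{equation}\label{eq.killingcurv}
\tfrac12\,\Delta\bigl(\|X\|^2\bigr)(y)=\|\nabla X\|^2(y)-\Ric(X,X)(y),
\end{equation}
together with the pointwise fact that where $\|X\|^2$ attains its constant value the Hessian of $\|X\|^2$ is negative semidefinite in the directions tangent to $T^l$. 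More usefully, for a $2$-plane spanned by $X(y)/\|X(y)\|$ and a unit $v\perp X(y)$ tangent to $T^l$, the sectional curvature equals $\|\nabla_v X\|^2/\|X\|^2$ after using the constancy $v\|X\|^2=0$ and $\nabla_X X=0$ (the geodesic equation), which is manifestly nonnegative. The main obstacle, and the step I would spend the most care on, is this curvature computation: one must justify carefully that $\nabla_X X=0$ (from $\psi$-orbits being geodesics), that $v(\|X\|^2)=0$ for $v$ tangent to $T^l$ (from constancy of $\|X\|$ on $T^l$), and then combine these with the Killing identity $g(\nabla_v X,X)=0$ and the curvature-Killing formula to reduce $K(y)(v,X)$ to the nonnegative expression $\|\nabla_v X\|^2/\|X\|^2$. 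The density and torus-structure parts are comparatively routine once the reduction to the compact abelian $K$ is in place.
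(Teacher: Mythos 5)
Your reduction to Proposition \ref{prop.closopsub} via the cited result of \cite{Yau1977}, the identification $N=K(x)\cong K/K_x=T^l$ (using compactness of $K$ and normality of $K_x$ in the abelian $K$), the exclusion of $l=1$, and the density claims all match the paper's argument and are sound. The genuine gap is in the step asserting that the $\psi$-orbit through an arbitrary $y\in T^l$ is a geodesic. You derive this from constancy of $\|X\|$ on $T^l$ together with the constant-length Killing field fact of \cite{BerNik7}; but that fact requires $\|X\|$ to be constant on (an open subset of) $M$, whereas you only know constancy on the submanifold $T^l$, which gives no control of the component of $\operatorname{grad}\|X\|^2=-2\nabla_XX$ normal to $T^l$. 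Worse, your intermediate claim that ``a Killing field whose length is constant along one of its integral curves has that curve as a geodesic'' is false: \emph{every} Killing field has constant length along each of its integral curves (its flow preserves the field), yet, for example, the latitude orbits of the rotation field on the round $S^2$ are not geodesics. What is needed --- and what the paper actually uses --- is the criterion that the orbit through $y$ is a geodesic if and only if $y$ is a critical point \emph{in $M$} of $f(y)=g_y(\widetilde{U},\widetilde{U})$, i.e.\ $\nabla_XX=-\tfrac12\operatorname{grad}f$ vanishes at $y$: since $\gamma$ is a geodesic, $x$ is critical; $f$ is invariant under the flow and hence, by continuity, under $K$; and criticality is transported by the isometries of $K$, which act transitively on $N$. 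Alternatively, a one-line repair is already available inside your own setup: since $K$ is abelian, writing $y=a(x)$ with $a\in K$ gives $\psi(t)(y)=a(\psi(t)(x))=a(\gamma(t))$, so the orbit through $y$ is the isometric image of $\gamma$, hence a geodesic, and it is dense in $a(N)=N$.

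The curvature part is essentially the paper's Jacobi-field computation rewritten in Killing-field form, and it is correct \emph{modulo} the same gap: to kill the Hessian term in $g(R(X,v)v,X)=\|\nabla_vX\|^2-\tfrac12\operatorname{Hess}f(v,v)$ you need more than $v(\|X\|^2)=0$; for a curve $c$ in $N$ one has $\frac{d^2}{ds^2}f(c(s))=\operatorname{Hess}f(c',c')+df(\nabla_{c'}c')$, so the tangential Hessian of $f$ vanishes on $N$ only once you know $df=0$ at every point of $N$ --- which is exactly the criticality statement above, equivalent to the geodesic property of all orbits. With either of the two repairs, this step closes and your argument coincides in substance with the paper's (which runs the same computation via the Jacobi field $Y(t)=d(\psi(t))_p(Y_p)$ coming from a geodesic variation, and also offers a second proof via the Gauss equation).
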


\begin{proof} Put $x=\gamma(0) \in M$.
Consider the closure $K$ of $\psi(\mathbb{R})$ in~$G$ and the closure $N$ of
$\gamma(\mathbb{R})$ in~$M$.
By Proposition \ref{prop.closopsub}, $K$ is a torus $T^k$ of dimension $k\geq 2$.

Note that $N$ is invariant under the action of every $\psi(t)$, $t\in \mathbb{R}$. Indeed,
if $x_0\in N$, then there are $t_n\in \mathbb{R}$ such that $\gamma(t_n)=\psi(t_n)(x)\to x_0$
as $n \to \infty$. Hence, $\psi(t)(\gamma(t_n))=\psi(t+t_n)(x)\to \psi(t)(x_0)$ as
$n \to \infty$, i.~e. $\psi(t)(x_0) \in N$.
Now, it is easy to see that $N$ is invariant even under the action of $K$. Moreover,
this action is transitive. Indeed, for every two points $a,b\in N$, there are a sequence
$t_n$ such that $a_n:=\psi(t_n)(x)\to a$ as $n\to \infty$ and
a sequence $s_n$ such that $b_n:=\psi(s_n)(x)\to b$ as $n\to \infty$. It is clear that
a sequence $\varphi_n = \psi(s_n-t_n)\in \psi(\mathbb{R})\subset K$
is such that $\varphi_n(a_n)=b_n$ for all $n$.
Passing, if necessary, to a subsequence, we can assume that $\varphi_n \to \varphi$ as
$n\to \infty$ for some $\varphi \in K \subset \Isom(M,g)$.
Hence, $b=\lim\limits_{n\to \infty} b_n =\lim\limits_{n\to \infty}\varphi_n(a_n) = \varphi (a)$,
$K$ acts transitively on $N$, and $N$ is the orbit of $K$ through the point $x$.
Hence, $N$ is a homogeneous space of a torus $K=T^k$, therefore, $N$ is a torus itself,
i.~e. $N=T^l$ with $l\geq 2$ (since $\gamma$ is not closed).

Note that $x$ is a critical point for the function
$y\in M \mapsto g_y(\widetilde{U},\widetilde{U})$, where $\widetilde{U}$ is a
Killing field, corresponding
to $U\in \mathfrak{g}$. It is easy to see that the value of this function is constant on
$N$. Hence, any point $y$ of $N$ is a critical point for the same function and integral
curve of $\widetilde{U}$ is a homogeneous geodesic. Since the distance between the points
$\psi(t_1)(x)$ and $\psi(t_2)(y)$ is equal to the distance between
the points $\psi(t_1+t)(x)$ and $\psi(t_2+t)(y)$ for every $t, t_1, t_2 \in \mathbb{R}$,
then the geodesic $\psi(t)(y)$, $t\in \mathbb{R}$,
is dense in $N$ for all $y \in N$.

Finally, let us prove the assertion on the sectional curvature.
Due to the previous assertion, we may (without loss of generality) consider only
points of the geodesic $\gamma$.
Suppose that $p=\gamma(s)$ for some $s\in \mathbb{R}$.
Let $Y_p$ be a unit tangent vector to $N=T^l$ at $p$ orthogonal to $\widetilde{U}_p$.
We define  the vector field $Y$ along $\gamma$ by setting $Y(t) = d(\psi(t))_p (Y_p)$.
By the construction of $Y$ and the previous discussion,
$Y$ is obtained with an 1-parameter geodesic variation of $\gamma$, i.~e. $Y$ is a
Jacobi field, therefore,
\begin{equation}\label{jac.equ}
\nabla^2 Y + R( Y, \gamma ')\gamma ' = 0.
\end{equation}
Taking inner product on both sides of  (\ref{jac.equ}) with $Y$ we obtain that the sectional
curvature of the $2$-plane spanned on $Y$ and $\gamma'$ (the latter is parallel to
$\widetilde{U}$ on $\gamma$) satisfies the equality
$K_{sec} = - g(\nabla^2 Y,Y)/g(\gamma', \gamma')$. Since
$g(Y, Y)$ is constant along $\gamma$, we have
\begin{eqnarray*}
2g(\nabla Y, Y)= \frac{d}{dt}\, g(Y, Y)=0, \\
0=\frac{d}{dt}\, g(\nabla Y, Y)= g(\nabla^2 Y,Y) + g(\nabla Y,\nabla Y),
\end{eqnarray*}
hence, $K_{sec} = g(\nabla Y,\nabla Y)/g(\gamma', \gamma') \geq 0$.
\end{proof}
\medskip

\begin{remark}
\label{nontrisotr}
Let $K_x$ be the isotropy subgroup of $K$ at the point $x\in N\subset M$. Then
$N=K/K_x$. It is interesting to study explicit examples with non-discrete $K_x$.
Note that, for any $a\in K_x$ (and, moreover, for any $a$ from the isotropy subgroup
of $\Isom(M,g)$ at the point $x$), the orbit of the group
$\xi(t)=\exp(t(\Ad(a)(U)))$, $t\in \mathbb{R}$, through $x$ is a geodesic.
\end{remark}

We have the following obvious corollary.

\begin{corollary}
\label{nonboundeded}
If a homogeneous geodesic $\gamma$ is not bounded in $(M,g)$ then $\gamma(R)$ is a closed subset in $M$.
\end{corollary}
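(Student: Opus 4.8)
The plan is to prove the contrapositive: assuming that $\gamma(\mathbb{R})$ is \emph{not} a closed subset of $M$, I will show that $\gamma$ is bounded. Throughout I write $\gamma(t)=\psi(t)(x)$ with $\psi(t)=\exp(Ut)$, $U\in\mathfrak{g}$, $x=\gamma(0)$, exactly as in Theorem \ref{the.closhomgeod}. The only inputs I expect to need are Proposition \ref{prop.closopsub} and the fact, recalled just before Theorem \ref{the.closhomgeod}, that every orbit of a \emph{closed} subgroup of $\Isom(M,g)$ is a closed subset of $M$.

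First I would dispose of the case in which the group is closed. Apply Proposition \ref{prop.closopsub} to the one-parameter subgroup $\psi$. If $\psi(\mathbb{R})$ were a closed subgroup of $G=\Isom(M,g)$ (cases 1) and 2) of that proposition), then its orbit through $x$, namely $\gamma(\mathbb{R})=\psi(\mathbb{R})(x)$, would be a closed subset of $M$ by the cited orbit-closedness result. This contradicts the standing assumption, so we must be in case 3): $\psi(\mathbb{R})$ is not closed and its closure $K:=\overline{\psi(\mathbb{R})}$ in $G$ is a torus $T^k$ with $k\geq 2$.

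The conclusion then follows from compactness. Since $K$ is a torus, it is compact, and the orbit map $\varphi\in K\mapsto \varphi(x)\in M$ is continuous; hence the orbit $K(x)$ is a compact, and therefore bounded, subset of $M$. Because $\gamma(\mathbb{R})=\psi(\mathbb{R})(x)\subseteq K(x)$, the geodesic $\gamma$ is bounded, which is precisely the contrapositive of the assertion.

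I do not expect any genuine obstacle here, which is why the statement is labelled an obvious corollary. The one point worth flagging is a choice of route: one could instead argue directly from Theorem \ref{the.closhomgeod}, which says that a non-closed $\gamma(\mathbb{R})$ lies inside a compact torus $T^l$ and is therefore bounded. I prefer the argument through Proposition \ref{prop.closopsub} together with the orbit-closedness fact, because it makes transparent that the completeness hypothesis of Theorem \ref{the.closhomgeod} is not actually needed for this corollary; the compactness of $K$ and the continuity of the orbit map are all that is used.
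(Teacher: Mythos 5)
Your argument is correct. The paper gives no written proof: the corollary is presented as an immediate consequence of Theorem \ref{the.closhomgeod}, whose contrapositive it literally is --- if $\gamma(\mathbb{R})$ is not closed, that theorem places $\gamma(\mathbb{R})$ inside a compact torus $T^l\subset M$, hence $\gamma$ is bounded. You take a genuinely lighter route: you go back to the dichotomy of Proposition \ref{prop.closopsub} for $\psi(\mathbb{R})\subset G=\Isom(M,g)$ and combine it with the orbit-closedness fact cited before Theorem \ref{the.closhomgeod} (Proposition 1 of the paper of Yau): if $\psi(\mathbb{R})$ is closed in $G$ (cases 1 and 2), the orbit $\gamma(\mathbb{R})$ is closed, while in case 3 the closure $K=\overline{\psi(\mathbb{R})}$ is a compact torus, so $\gamma(\mathbb{R})\subseteq K(x)$ is contained in a compact, hence bounded, set. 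What this buys is exactly what you say: none of the substantive content of Theorem \ref{the.closhomgeod} (transitivity of $K$ on the closure $N$, the identification $N=T^l$, the Jacobi-field curvature estimate) is used, and the completeness hypothesis of that theorem is not invoked --- with the caveat that your claim of dispensing with completeness is only as strong as the cited orbit-closedness result for non-complete manifolds; the paper itself states that result without a completeness restriction, so your use of it is consistent with the paper's own conventions. A small further economy you could note: Yau's result is needed only in case 1) of Proposition \ref{prop.closopsub}, since in case 2) the orbit $\psi(\mathbb{R})(x)$ is compact, hence closed, for free.
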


Theorem \ref{the.closhomgeod} leads to the following natural questions (we use the
above notation).

\begin{question}\label{ques1}
Is it true that $N=T^l$ is totally geodesic in $(M,g)$?
\end{question}

\begin{question}\label{ques2}
Is it true that for any $W\in \operatorname{Lie}(K=T^k)$, the orbit of the group
$\exp(tW)$, $t\in \mathbb{R}$, trough every point of $N=T^l$ is a geodesic?
\end{question}

The above two questions are especially interesting for the case of homogeneous
Riemannian manifolds.
It is well known that these questions have positive answer for symmetric spaces $M$. In this
case, the closure of a given geodesic is either the geodesic itself or a totally geodesic torus
$N=T^l$, where $1 \leq l\leq \rk M$ and $\rk M$ is the rank of the symmetric space $M$.
In the next section we will show that generally both these questions have negative answers
even for the case of Lie groups with left-invariant Riemannian metrics.
\smallskip

There is another proof of the last assertion in Theorem \ref{the.closhomgeod}, which gives an
additional information connected with Questions~\ref{ques1} and \ref{ques2}. It is based on the
famous Gauss formula and equation. We shall use corresponding results from
\cite{KobNom1969}, with a little different notation.

Let $N$ be any smooth submanifold of a smooth Riemannian manifold $(M,g)$ with the induced metric
tensor $g'$ and Levi-Civita connection $\nabla'$. If $X,Y$ are vector fields on $N$ then
\textit{the Gauss formula} is
$$\nabla_XY=\nabla'_XY + \alpha(X,Y),$$
where $\alpha$ is \textit{the second fundamental form} of $N$ (in $M$) and $\alpha(X,Y)$ is
orthogonal to $N$.

By Proposition 4.5 in \cite{KobNom1969} we have
\begin{proposition}
\label{sect}
Let $X$ and $Y$ be a pair of orthonormal vectors in $N_x$, where $x\in N$. For 2-plane
$X\wedge Y$, spanned on $X$ and $Y$, we have
$$K_M(X\wedge Y)=K_N(X\wedge Y)+ g(\alpha(X,Y),\alpha(X,Y))-g(\alpha(X,X),\alpha(Y,Y)),$$
where $K_M$ (respectively, $K_N$) denotes the sectional curvature in $M$ (respectively, $N$).
\end{proposition}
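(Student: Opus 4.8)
The plan is to derive the identity from the Gauss formula together with its companion Weingarten relation, and then to specialize the resulting (tensorial) Gauss equation to a statement about sectional curvatures. Throughout I extend $X$ and $Y$ to vector fields tangent to $N$ in a neighborhood of $x$; since the identity to be proved is tensorial in $X,Y$, the particular extension is immaterial. I work with the curvature convention $R(X,Y)Z=\nabla_X\nabla_YZ-\nabla_Y\nabla_XZ-\nabla_{[X,Y]}Z$ and the sectional curvature $K(X\wedge Y)=g(R(X,Y)Y,X)$ for orthonormal $X,Y$, which is consistent with the conventions already used in (\ref{jac.equ}).

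First I would record the \emph{Weingarten relation}: if $\xi$ is a vector field normal to $N$ and $Z$ is tangent to $N$, then differentiating $g(\xi,Z)=0$ in any tangent direction $V$ and applying the Gauss formula $\nabla_VZ=\nabla'_VZ+\alpha(V,Z)$ gives $g(\nabla_V\xi,Z)=-g(\xi,\alpha(V,Z))$. This is the only input beyond the Gauss formula itself, and it is exactly what converts covariant derivatives of the normal-valued form $\alpha$ back into pure second-fundamental-form terms.

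Next I would expand $R_M(X,Y)Y$ by applying the Gauss formula twice: writing $\nabla_YY=\nabla'_YY+\alpha(Y,Y)$ and differentiating by $\nabla_X$, then performing the symmetric computation for $\nabla_Y\nabla_XY$, and finally treating $\nabla_{[X,Y]}Y$. I then take the inner product of everything with the tangent vector $X$. Pairing with $X$ annihilates every purely normal component, so the tangential connection terms reassemble into $g(R_N(X,Y)Y,X)$, while the two places where $\nabla$ falls on a normal field $\alpha(\cdot,\cdot)$ are handled by the Weingarten relation: $g(\nabla_X\alpha(Y,Y),X)=-g(\alpha(X,X),\alpha(Y,Y))$ and $g(\nabla_Y\alpha(X,Y),X)=-g(\alpha(X,Y),\alpha(X,Y))$, the latter using the symmetry $\alpha(X,Y)=\alpha(Y,X)$. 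Collecting terms with their signs yields the Gauss equation
$$g(R_M(X,Y)Y,X)=g(R_N(X,Y)Y,X)+g(\alpha(X,Y),\alpha(X,Y))-g(\alpha(X,X),\alpha(Y,Y)).$$

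Finally, since $X,Y$ are orthonormal, the two sides are precisely $K_M(X\wedge Y)$ and $K_N(X\wedge Y)$ plus the curvature-tensor terms, which is the asserted formula. The only point requiring care is the bookkeeping of tangential versus normal parts after each use of the Gauss formula, together with the correct application of the Weingarten relation; no genuine obstacle arises, because $N$ carries the induced metric and $\nabla'$ is its Levi-Civita connection, so the tangential component of $\nabla_VZ$ is automatically $\nabla'_VZ$.
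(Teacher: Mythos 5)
Your proof is correct. Note that the paper itself gives no proof of this statement: it is quoted directly as Proposition 4.5 of Kobayashi--Nomizu \cite{KobNom1969}, and your argument --- extending $X,Y$ tangentially, applying the Gauss formula twice, using the Weingarten relation $g(\nabla_V\xi,Z)=-g(\xi,\alpha(V,Z))$ to convert the two normal-derivative terms into $-g(\alpha(X,X),\alpha(Y,Y))$ and $+g(\alpha(X,Y),\alpha(X,Y))$, and observing that pairing with the tangent vector $X$ (together with the tangency of $[X,Y]$ to $N$) reassembles the remaining terms into $g(R_N(X,Y)Y,X)$ --- is exactly the standard derivation of the Gauss equation found in that reference, with the sign bookkeeping done correctly.
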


Under conditions of Theorem \ref{the.closhomgeod}, we can take $X=\gamma'(0)$ and
any unit vector $Y$ at $x$, tangent to $N=T^l$ and orthogonal to $X$ (or even
corresponding parallel vector fields $X$, $Y$ on $N=T^l$). Then $\alpha(X,X)=0$ and
we immediately get the last statement of Theorem~\ref{the.closhomgeod} and the following
corollary.

\begin{corollary}
\label{nonclosed}
If $(M,g)$ has negative sectional curvature then $\gamma(R)$ is a closed subset in~$M$.
\end{corollary}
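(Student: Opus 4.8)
The plan is to derive Corollary \ref{nonclosed} as an immediate consequence of the Gauss formula in Proposition \ref{sect}, combined with the curvature analysis already available from Theorem \ref{the.closhomgeod}. The corollary is stated as a contrapositive: if sectional curvature is everywhere negative, then $\gamma(\mathbb{R})$ must be closed. So first I would argue by contradiction, assuming $\gamma(\mathbb{R})$ is \emph{not} closed in $M$.

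Under this assumption, the hypotheses of Theorem \ref{the.closhomgeod} are met, so the closure $N=T^l$ is a torus of dimension $l\geq 2$, every orbit of $\psi(t)$ through a point of $N$ is a geodesic, and the last conclusion of that theorem applies. The key is to produce a $2$-plane tangent to $N$ whose sectional curvature in $M$ is forced to be nonnegative, contradicting strict negativity. To this end I would fix a point $x\in N$, take $X=\gamma'(0)$, and since $l\geq 2$ choose a unit vector $Y\in N_x$ tangent to $N$ and orthogonal to $X$. Because $X=\gamma'(0)$ is the velocity of a geodesic lying inside $N$, the curve $t\mapsto\gamma(t)$ is geodesic both in $M$ and, being contained in $N$, as a curve in $N$; hence its acceleration vanishes in $M$, which forces the second fundamental form to satisfy $\alpha(X,X)=0$. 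Plugging $\alpha(X,X)=0$ into the Gauss formula from Proposition \ref{sect} gives
$$
K_M(X\wedge Y)=K_N(X\wedge Y)+g(\alpha(X,Y),\alpha(X,Y)).
$$

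Now I would observe that each of the two terms on the right is nonnegative. The term $g(\alpha(X,Y),\alpha(X,Y))\geq 0$ trivially, being the squared norm of the normal vector $\alpha(X,Y)$. For $K_N(X\wedge Y)$, the submanifold $N=T^l$ is a torus acted on transitively by the abelian group $K=T^k$; it carries the induced metric and, since $K$ is a compact connected abelian Lie group acting transitively by isometries, the induced metric on $N$ is flat (a homogeneous metric on a torus under a transitive abelian isometry group is the flat metric), so $K_N\equiv 0$. Therefore $K_M(X\wedge Y)\geq 0$, contradicting the assumption that all sectional curvatures of $(M,g)$ are negative. This contradiction shows $\gamma(\mathbb{R})$ must be closed, establishing the corollary.

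The main obstacle I anticipate is justifying that $K_N(X\wedge Y)=0$ cleanly, i.e.\ that the induced intrinsic metric on $N=T^l$ is flat. The transitivity of the abelian torus action by isometries is exactly what forces flatness, but one must be careful that the \emph{induced} metric from $(M,g)$, not some abstract bi-invariant metric, is the relevant one; the point is that $K$ acts on $N$ by isometries of the induced metric (since $K\subset\Isom(M,g)$ preserves $N$), the action is transitive, and a homogeneous Riemannian metric on a torus whose isometry group contains a transitive abelian subgroup must be flat. Alternatively, one can sidestep this and simply invoke the last statement of Theorem \ref{the.closhomgeod} directly, which already yields $K_M(X\wedge Y)\geq 0$ for the $2$-plane containing $\gamma'(0)$; combined with strict negativity this is the contradiction, and the Gauss-formula route merely provides the promised additional geometric insight relating the result to the second fundamental form.
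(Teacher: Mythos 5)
Your proof is correct and follows essentially the same route as the paper, which obtains Corollary \ref{nonclosed} by contraposition from Theorem \ref{the.closhomgeod} combined with the Gauss formula of Proposition \ref{sect} and the observation $\alpha(X,X)=0$ (equivalently, by invoking directly the nonnegative-curvature conclusion of Theorem \ref{the.closhomgeod}, your stated alternative). Your explicit verification that the induced metric on $N=T^l$ is flat, via the transitive action of the abelian group $K$ by isometries, merely spells out a step the paper leaves implicit.
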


In general case, $N$ is a totally geodesic submanifold in $(M,g)$ if and only if $\alpha \equiv 0$
on~$N$. Therefore, using in our case the above parallel vector fields $X$, $Y$ on $N=T^l$ for
$l\geq 2$, we get the following corollary.

\begin{corollary}
\label{possect}
If $(M,g)$ has positive sectional curvature and $l\geq 2$ then $N=T^l$ is not totally geodesic
submanifold in $(M,g)$.
\end{corollary}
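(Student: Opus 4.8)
The plan is to argue by contradiction: assume that $N=T^l$ with $l\ge 2$ is totally geodesic, so that its second fundamental form $\alpha$ vanishes identically, and then exhibit a tangent $2$-plane of zero sectional curvature in $M$, contradicting positivity. The whole argument rests on two inputs, the Gauss equation of Proposition \ref{sect} and the flatness of the induced metric on $N$. The second of these is the real content, so I would establish it first; it is also where the main work lies.

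For the flatness, recall from Theorem \ref{the.closhomgeod} that $N$ is the orbit through $x$ of the compact connected abelian group $K=T^k$ acting by isometries, so that $N=K/K_x$ is a homogeneous Riemannian manifold of an abelian group. The crucial observation is that $K_x$ acts trivially on all of $N$: if $a\in K_x$ and $y=b\cdot x$ for some $b\in K$, then $a\cdot y=ab\cdot x=ba\cdot x=b\cdot x=y$, using commutativity together with $a\cdot x=x$. Hence the effective quotient $\overline{K}=K/K_x$ acts simply transitively on $N$ by isometries, and via the orbit map $\overline{K}\to N$, $\overline{b}\mapsto \overline{b}\cdot x$, this identifies $(N,g')$ with the compact connected abelian Lie group $\overline{K}\cong T^l$ carrying a left-invariant metric. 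On an abelian Lie group a left-invariant metric is automatically bi-invariant, and a bi-invariant metric has nonnegative sectional curvature equal to a positive constant times the squared norm of Lie brackets of orthonormal tangent vectors; since these brackets vanish for an abelian group, $N$ is flat, i.e. $K_N\equiv 0$, and in particular $N$ admits global parallel orthonormal frames.

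It then remains to combine this with the Gauss equation. I would take $X=\gamma'(0)$, the unit tangent to the homogeneous geodesic through $x$; since that curve is a geodesic of $M$ lying in $N$, the Gauss formula $\nabla_XX=\nabla'_XX+\alpha(X,X)$ forces $\alpha(X,X)=0$, both summands vanishing as the tangential and normal parts of $\nabla_XX=0$. Because $l\ge 2$, I can choose a unit vector $Y\in N_x$ orthogonal to $X$. Substituting $X,Y$ into Proposition \ref{sect} and using $K_N(X\wedge Y)=0$ and $\alpha(X,X)=0$ gives $K_M(X\wedge Y)=g(\alpha(X,Y),\alpha(X,Y))$. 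The standing assumption $\alpha\equiv 0$ now yields $K_M(X\wedge Y)=0$, contradicting the hypothesis that $(M,g)$ has positive sectional curvature. Therefore $N$ cannot be totally geodesic, which is the assertion of the corollary.
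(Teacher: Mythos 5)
Your proof is correct and follows essentially the same route as the paper: both combine the Gauss equation of Proposition \ref{sect} with the intrinsic flatness of $N=T^l$ to produce a zero-curvature $2$-plane under the assumption $\alpha\equiv 0$, contradicting positive sectional curvature. The only difference is one of explicitness: the paper invokes parallel orthonormal vector fields $X$, $Y$ on $N$ (whose existence encodes the flatness), whereas you spell out why $N$ is flat via the simply transitive action of the abelian quotient $K/K_x$ and the bi-invariance of the induced metric, which is a legitimate filling-in of a step the paper leaves implicit.
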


There are Riemannian manifolds of positive sectional curvature that have homogeneous
geodesics with the closure $N=T^l$ for $l\geq 2$. For instance, we can consider the
sphere $S^3=U(2)/U(1)$ supplied with $U(2)$-invariant Riemannian metrics (the Berger
sphere), that are sufficiently close to the metric of constant curvature in order to
have positive sectional curvature, see \cite[pp.~587--589]{Z2}.
Such metrics are naturally reductive, hence geodesic orbit i.~e. all their
geodesics are homogeneous.
It is easy to choose a geodesic that has the closure of dimension $2$. Indeed,
there are only countable set of periodic geodesics through a given point. It is
known that any self-intersecting geodesic in a homogeneous Riemannian manifold is
periodic. Then for all other geodesics the closure $N=T^l$ is such that $l=2$ (since
the torus $T^3$ could not act on $S^3$ effectively),
see details in \cite[Example 1]{Zi76}. Similar examples could be constructed for the sphere
$S^{2n-1}=U(n)/U(n-1)$ with any $n \geq 2$. By Corollary \ref{possect}, they provide
counterexamples to Question~\ref{ques1}, hence, Question~\ref{ques2}.
Note that the Berger spheres are weakly symmetric \cite{Zi96, Yak}. This shows that
the behaviour of geodesics in weakly symmetric spaces are more
complicated than in symmetric spaces.
\medskip

Interesting results on homogeneous geodesics in Riemannian manifolds of negative
sectional curvature were obtained in \cite[pp.~19--22]{BiONei69}, where such geodesics
were called Killing geodesics.

Interesting results on the behaviour of geodesics in homogeneous Riemannian spaces
are obtained also in \cite{Rodionov1981, Rodionov1984, Rodionov1990}.

\section{Examples of homogeneous geodesics}
\label{ehg}

Here we consider some examples of homogeneous geodesics on Riemannian manifolds. We
restrict ourself to
Lie groups with left-invariant Riemannian metrics. It is known that any homogeneous Riemannian
space $(G/H, g)$ admits at least one homogeneous geodesic~\cite{Ko-Szen}.
In the partial case of Lie groups with left-invariant Riemannian metrics, this result
was obtained earlier in~\cite{Kaj}.

Special examples of $3$-dimensional non-unimodular Lie group admit exactly one homogeneous
geodesic \cite{Marin2002}. On the other hand, in a three-dimensional unimodular Lie group $G$
endowed with a left-invariant metric $g$, there always exist three mutually orthogonal
homogeneous geodesics through each point. Moreover, for generic metrics,
there are no other homogeneous geodesics \cite{Marin2002}.

Let $G$ be a connected Lie group supplied with a left-invariant Riemannian metric
$g$, that is generated with some inner product $(\cdot,\cdot)$ on
$\mathfrak{g}=\operatorname{Lie}(G)$. We call $X \in \mathfrak{g}$ a {\it geodesic vector}
if $\exp(sX)$, $s\in \mathbb{R}$, is a geodesic in $(G,g)$
(i.~e. it is a homogeneous geodesic).
By Lemma \ref{GO-criterion}, we see that $X \in \mathfrak{g}$ is a geodesic vector if and only if
$([X,Y],X) =0$ for all $Y\in \mathfrak{g}$.

If $G$ is compact and semisimple, then the minus Killing form
$\langle \cdot,\cdot \rangle$ of $\mathfrak{g}$ is positive definite and
$\langle[X,Y],Z \rangle+\langle Y,[X,Z] \rangle=0$ for all $X,Y,Z \in \mathfrak{g}$. Hence,
all $X \in \mathfrak{g}$ are geodesic vectors
for the inner product $\langle \cdot,\cdot \rangle$.
For an arbitrary inner product $(\cdot,\cdot)$ on
$\mathfrak{g}$, there is a basis $E_1, E_2, \dots, E_n$, $n=\dim(G)$, in
$\mathfrak{g}$, simultaneously orthonormal for $\langle \cdot,\cdot \rangle$ and
orthogonal for $(\cdot,\cdot)$. Consider
numbers $\mu_i \in \mathbb{R}$ such that
$(E_i,E_i)=\mu_i \langle E_i,E_i \rangle$. Then for any $i$ and any $Y\in \mathfrak{g}$ we get
$$
([E_i,Y],E_i)=\mu_i \langle [E_i,Y],E_i \rangle=-\mu_i \langle Y_i,[E_i,E_i] \rangle=0,
$$
i.~e. $E_i$ is a homogeneous vector (see more general results for homogeneous spaces
in~\cite{Ko-Szen}).
\smallskip

Let us show that a small deformation of a given inner product could seriously change
the set of homogeneous vectors.

\begin{example}\label{exam1}
Suppose that a compact semisimple Lie algebra $\mathfrak{g}$ is supplied with the minus Killing
form $\langle \cdot,\cdot \rangle$. For any nontrivial $U\in \mathfrak{g}$, we have a
$\langle \cdot,\cdot \rangle$-orthogonal decomposition
$\mathfrak{g}=C_{\mathfrak{g}}(U) \oplus [U,\mathfrak{g}]$, where $C_{\mathfrak{g}}(U)$
is the centralizer of $U$ in $\mathfrak{g}$. Indeed, the operator $\ad(U)$ is skew
symmetric with respect to $\langle \cdot,\cdot \rangle$, hence $X\in C_{\mathfrak{g}}(U)$
implies $\langle X,[U,\mathfrak{g}] \rangle=0$ and the converse is also true.

Let us take any vector $V\in \mathfrak{g}$ that is not orthogonal both to $U$ and to
$[U,\mathfrak{g}]$ with respect to $\langle \cdot,\cdot \rangle$. Note that a generic
vector in $\mathfrak{g}$ has this property. Let us define the following inner product:
$(X,Y)=\langle X,Y \rangle+\alpha \cdot \langle X,V \rangle \cdot\langle Y,V \rangle$,
where $\alpha>0$.

If $\langle X,V \rangle=0$, then $(X,Y)=\langle X,Y \rangle$ for every $Y\in\mathfrak{g}$,
hence such $X$ is a geodesic vector for the inner product $(\cdot,\cdot)$. On the other hand,
there is $W\in \mathfrak{g}$ such that $\langle[U,W], V\rangle \neq 0$, hence,
$$
([W,U],W)= \langle [W,U],W \rangle+\alpha  \cdot\langle [W,U],V \rangle \cdot\langle U,V \rangle=\alpha
\cdot\langle [W,U],V \rangle \cdot\langle U,V \rangle \neq 0,
$$
$U$ is not a geodesic vector for $(\cdot,\cdot)$.

If $\rk\,\mathfrak{g} \geq 2  $, then we can choose the vector $X \in C_{\mathfrak{g}}(U)$
with the property $\langle X,V \rangle=0$. Moreover, let us fix a Cartan (i.~e. maximal
abelian) subalgebra $\mathfrak{t} \subset \mathfrak{g}$. We may consider the vector
$X\in \mathfrak{t}$ such that it is a regular vector in $\mathfrak{g}$
($C_{\mathfrak{g}}(X)=\mathfrak{t}$)
and the closure of the 1-parameter group $\exp(sX)$, $s\in \mathbb{R}$, coincides with
a maximal torus $T:=\exp(\mathfrak{t})$ in $G$.
If $U \in \mathfrak{t}$ with $\langle U,X \rangle=0$ and $V\in\mathfrak{g}$ such that
$\langle V,X \rangle=0$, $\langle V,U \rangle\neq 0$ and
$\langle V,[U,\mathfrak{g}] \rangle\neq 0$, then $X$ is ($U$ is not) a geodesic vector
for $(\cdot,\cdot)$ as above.

Hence, the closure of a homogeneous geodesic $\exp(sX)$, $s\in \mathbb{R}$, is the torus
$T$, but the orbit $\exp(sU)$, $s\in \mathbb{R}$, is not geodesic,
although $U\in\mathfrak{t} =\operatorname{Lie}(T)$. This gives the negative answer to
Question \ref{ques2}.

For Levi-Civita connection (elements of $\mathfrak{g}$ are considered as left-invariant
vector fields on $G$, see e.~g. \cite{Bes}) we have
\begin{eqnarray*}
2(\nabla _XU,Y)=2(\nabla _UX,Y)=([Y,U],X)+(U,[Y,X])\\
=\langle[Y,U],X\rangle +\langle U,[Y,X]\rangle+\alpha  \cdot\langle [Y,U],V \rangle \cdot\langle X,V \rangle
+\alpha  \cdot\langle U,V \rangle \cdot\langle [Y,X],V \rangle\\
=\alpha  \cdot\langle U,V \rangle \cdot\langle [Y,X],V \rangle \neq 0
\end{eqnarray*}
for some $Y\in [X,\mathfrak{g}]=[\mathfrak{t},\mathfrak{g}]$. Indeed,
$[U,\mathfrak{g}]\subset [X,\mathfrak{g}]$
(due to $C_{\mathfrak{g}}(X)=\mathfrak{t} \subset C_{\mathfrak{g}}(U)$), hence there is
$Y\in \mathfrak{g}$ such that $\langle[X,Y], V\rangle \neq 0$. This implies the required
result. We may assume also (without loss of generality) that this $Y$
is in $[X,\mathfrak{g}]=[\mathfrak{t},\mathfrak{g}]$
(since we have a $\langle \cdot,\cdot \rangle$-orthogonal decomposition
$\mathfrak{g}=C_{\mathfrak{g}}(X) \oplus [X,\mathfrak{g}]=\mathfrak{t}\oplus [X,\mathfrak{g}]$
and the $\mathfrak{t}$-component of $Y$ commutes with $X$).
Further, since the subspace $[X,\mathfrak{g}]=[\mathfrak{t},\mathfrak{g}]$ is
$\langle \cdot,\cdot \rangle$-orthogonal to $C_{\mathfrak{g}}(X)=\mathfrak{t}$, then the
torus $T$ is not totally geodesic with respect to the left-invariant
metric generated with the inner product $(\cdot, \cdot)$.
This gives the negative answer to Question~\ref{ques1}.
\end{example}

Now we are going to study geodesic vectors for some left-invariant Riemannian metrics
on the Lie group $G=SU(2)\times SU(2)$.

\begin{example}\label{exam2} Let us consider the basis $E_i$, $i=1,\dots,6$, in
$\mathfrak{g}=\mathfrak{su}(2)\oplus \mathfrak{su}(2)$ such that
the first three vectors are in the first copy of $\mathfrak{su}(2)$ whereas other vectors are in
the second copy of $\mathfrak{su}(2)$ and
\begin{eqnarray*}
{[E_1,E_2]=E_3}, \quad [E_2,E_3]=E_1, \quad [E_1,E_3]=-E_2,\\
{[E_4,E_5]=E_6}, \quad [E_5,E_6]=E_4, \quad [E_4,E_6]=-E_5.
\end{eqnarray*}
It is clear that this basis is orthonormal with respect to
the bi-invariant Riemannian metric $\langle \cdot , \cdot \rangle =-1/2 \cdot B(\cdot , \cdot)$,
where $B$ is the Killing form of $\mathfrak{g}$.
Let us consider a non-degenerate $(6\times 6)$-matrix $A=(a_{ij})$ and then inner
product $(\cdot, \cdot)$ such that the vectors $F_i=\sum_{j=1}^6 a_{ij} E_j$,
$i=1,\dots,6$, constitute a $(\cdot, \cdot)$-orthonormal basis.
It is easy to see that there is an one-to-one correspondence between the set of
all inner products on $\mathfrak{g}$ and the set of lower triangle matrices
$A=(a_{ij})$ with positive elements on the principal diagonal.

Let us consider the inner product $(\cdot, \cdot)_d$ that is generated with matrix
$$
A=
\left(
\begin{array}{cccccc}
1 & 0 & 0 & 0 & 0 & 0\\
0 & 1 & 0 & 0 & 0 & 0\\
0 & 0 & 1 & 0 & 0 & 0\\
1 & 0 & 0 & 1 & 0 & 0\\
1 & 0 & 0 & 0 & 1 & 0\\
d & 1 & 1 & 0 & 0 & d\\
\end{array}
\right),
$$
where $d>0$. Direct calculations (that could be performed with using any system of
computer algebra) show the the vector
$V=\sum_{i=1}^6 v_i E_i$ is a geodesic vector for the inner product $(\cdot, \cdot)_d$
if and only if one of the following conditions holds:

\begin{itemize}
\item $v_1 \in \mathbb{R}$, $v_2=v_3=0$, $v_4 \in \mathbb{R}$, $v_5 \in \mathbb{R}$,
$v_6=d\cdot v_1$;
\item $v_1 \in \mathbb{R}$, $v_2 \in \mathbb{R}$, $v_3 \in \mathbb{R}$,
$v_4 \in \mathbb{R}$, $v_5=2v_1-v_4$, $v_6=d\cdot v_1 +v_2+v_3$;
\item $v_1=d\cdot v_3$, $v_2=v_3$, $v_3 \in \mathbb{R}$, $v_4=d\cdot v_3$,
$v_5=d\cdot v_3$, $v_6 \in \mathbb{R}$.
\end{itemize}
Hence, the set of geodesic vectors for the metric $(\cdot, \cdot)_d$ is the union
of one 2-dimensional, one 3-dimensional, and one 4-dimensional linear subspaces in
$\mathfrak{g}=su(2)\oplus su(2)$.

The set of vectors $V=\sum_{i=1}^6 v_i E_i$ with $v_2=v_3=v_4=v_5=0$ determines a
Cartan subalgebra $\mathfrak{t}$ in $\mathfrak{g}$.
We see that $V \in \mathfrak{t}$ is a geodesic vector if either $v_1=0$, or $v_6=d\cdot v_1$.
Clear that in the latter case for any irrational $d$ we get the vector $V$ such that
the closure of the 1-parameter group $\exp(sV)$, $s\in \mathbb{R}$, coincides
with a maximal torus $T:=\exp(\mathfrak{t})$ in $G$.
Therefore, we again get the negative answer to Question \ref{ques2}.
\smallskip

If $D$ is the discriminant of the characteristic polynomial of the Ricci operator
$\operatorname{Ric}_d$ of the metric $(\cdot, \cdot)_d$, then
$$
D=\frac{3^{18}\cdot 17^2}{2^8 \cdot d^{44}} \bigl(1+o(d)\bigr)\quad \mbox{as} \quad d \to 0.
$$
Therefore, for sufficiently small positive $d$, $\operatorname{Ric}_d$
has 6 distinct eigenvalues. This implies that the full connected isometry group
of the metric $(\cdot, \cdot)_d$ is $G=SU(2)\times SU(2)$.
Indeed, if the dimension of full isometry group is $>6$, then the isotropy subgroup
is non-discrete and have (due to the effectiveness)
not only one-dimensional (hence, trivial) irreducible subrepresentations in
the isotropy representation, hence $\operatorname{Ric}_d$
should have some coincided eigenvalues.
Therefore, every operator $\ad(X)$, $X\in \mathfrak{g}=\mathfrak{su}(2)\oplus \mathfrak{su}(2)$,
is not skew symmetric with respect to $(\cdot, \cdot)_d$ for sufficiently small $d>0$.
\end{example}

Let us recall the following natural question.
\begin{question}\label{ques3}
Whether a given metric Lie algebra $(\mathfrak{g},Q)$ admits a basis that consists of
geodesic vectors (a geodesic basis)?
\end{question}

This question was studied in \cite{CHNT, CLNN, Ca-Ko-Ma, Ko-Szen}.
In \cite{Ko-Szen} it is
shown that semisimple Lie algebras possess an orthonormal basis comprised geodesic vectors,
for every inner product (it had been explained a little earlier in this paper). Results
for certain solvable algebras are given in \cite{Ca-Ko-Ma}.

It is easy to see that if $\mathfrak{g}$ possesses an orthonormal geodesic
basis (with respect to some inner product), then $\mathfrak{g}$ is unimodular \cite{CLNN}.
The authors of \cite{CLNN} proved that
every unimodular Lie algebra, of dimension at most $4$, equipped with an inner product,
possesses an orthonormal basis comprised geodesic vectors, whereas
there is an example of a solvable unimodular Lie algebra of dimension $5$ that has
no orthonormal geodesic basis, for any inner product.

The authors of \cite{CHNT} were interested in giving conditions for the Lie algebra
$\mathfrak{g}$ to admit a basis (not necessary orthonormal)
which is a geodesic basis with respect to some inner product.  The main results of
\cite{CHNT} show that the
following Lie algebras admit an inner product having a geodesic basis:
\begin{itemize}
\item unimodular solvable Lie algebras with abelian nilradical;
\item some Lie algebras with abelian derived algebra;
\item Lie algebras having a codimension one ideal of a particular kind;
\item unimodular Lie algebras of dimension $5$.
\end{itemize}
The authors of \cite{CHNT} also obtained
some negative results. For instance, they found the list of nonunimodular Lie
algebras of dimension $4$ admitting no geodesic basis.

\section{One special quadratic mapping}
\label{sqm}

We have discussed that the set of geodesic vectors on a given Lie algebra depends on
the chosen inner product $(\cdot,\cdot)$.
Let us consider this problem in a more general context.

Let $\mathfrak{g}$ be a Lie algebra, then every inner product $(\cdot,\cdot)$ on
$\mathfrak{g}$ determines a special quadratic mapping
\begin{equation}\label{spec.mapp}
\xi=\xi_{(\cdot,\cdot)}:\mathfrak{g} \mapsto \mathfrak{g}
\end{equation}
as follows: For any $X\in \mathfrak{g}$ we put $\xi(X):=V$, where $V$ is a unique vector in
$\mathfrak{g}$ with the equality $([X,Y],X)=(V,Y)$ for all $Y\in \mathfrak{g}$.

This mapping is well known, see e.~g. Proposition 7.28 in \cite{Bes} (where
$\xi(X):=-U(X,X)$ in the notation of this proposition)
for its generalization for homogeneous Riemannian spaces.

The set of zeros of the mapping $\xi_{(\cdot,\cdot)}$ is exactly the set of geodesic
vectors in $\mathfrak{g}$ with respect to the inner product $(\cdot,\cdot)$.
In particular, this set always contains the center of the Lie algebra $\mathfrak{g}$.
For any bi-invariant inner product $(\cdot,\cdot)$, the map
$\xi_{(\cdot,\cdot)}$ is obviously trivial. On the other hand,
this map could have unexpected properties for some special inner products $(\cdot,\cdot)$.

\begin{example}\label{exam3} Let us consider a basis $E_i$, $i=1,2,3$, in
$\mathfrak{g}=\mathfrak{su}(2)$ such that
$$
[E_1,E_2]=E_3, \quad [E_2,E_3]=E_1, \quad [E_1,E_3]=-E_2.
$$
Fix some positive numbers $a,b,c$ and consider the inner product $(\cdot,\cdot)$
on $\mathfrak{g}=\mathfrak{su}(2)$ that has an orthonormal basis
$F_1=aE_1$, $F_2=bE_2$, $F_3=cE_3$. Direct calculations give us an explicit form
of the mapping $\xi_{(\cdot,\cdot)}$
(we use coordinates of all vectors with respect to the original basis $E_1,E_2,E_3$):
$$
\xi_{(\cdot,\cdot)} \bigl( x_1,x_2,x_3 \bigr)=\left( \frac{a(b-c)}{bc}\, x_2x_3,  \frac{b(c-a)}{ac}\, x_1x_3, \frac{c(a-b)}{ab}\, x_1x_2 \right).
$$
It is easy to see that for $a\neq b\neq c\neq a$, any geodesic vector should be a
multiple of one of the vectors $E_1,E_2,E_3$.
On the other hand, for $a=b\neq c$, geodesic vectors are exactly the vectors either
with $x_3=0$ or with $x_1=x_2=0$.
For $a=b=c$ we have a bi-invariant inner product.
Obviously, $x_1x_2>0$ and $x_1x_3>0$ imply $x_2x_3>0$, hence, $\xi_{(\cdot,\cdot)}$
is not surjective. This correlates with Theorem 8 in \cite{ArZhuk2016}, stating (in
particular) that any surjective quadratic mapping $q:\mathbb{R}^3 \rightarrow \mathbb{R}^3$
has no non-trivial zero.
\end{example}

It could be an interesting problem to study general properties of the
quadratic mapping~(\ref{spec.mapp}) for general Lie algebras and general inner products.
Below we consider some results in this direction.
It should be recalled that the mapping $\xi_{(\cdot,\cdot)}$ always has at least one
non-trivial zero according to~\cite{Kaj}.
Nevertheless, this property does not imply directly the non-surjectivity of
$\xi_{(\cdot,\cdot)}$ for $\dim \mathfrak {g} \geq 6$, see
Example~3 and the corresponding discussion in \cite{ArZhuk2016}.
\smallskip

Recall that a Lie algebra $\mathfrak{g}$ is unimodular if $\trace \ad(Y)=0$ for all
$Y\in \mathfrak{g}$ (here, as usual, the operator
$\ad(Y):\mathfrak{g} \rightarrow \mathfrak{g}$ is defined with $\ad(Y)(Z)=[Y,Z]$). All
compact and semisimple Lie algebras are unimodular.
Any Lie algebra $\mathfrak{g}$ contains {\it the unimodular kernel}, the maximal
unimodular ideal, that could be described as follows:
$$
\mathfrak{u}:=\{ Y\in \mathfrak{g}\,|\,\trace \ad(Y)=0\}.
$$
For a non-unimodular Lie algebra $\mathfrak{g}$, the ideal $\mathfrak{u}$ has
codimension $1$ in $\mathfrak{g}$.
The following result is also well known (see e.~g. Lemma 7.32 in \cite{Bes}).

\begin{proposition}\label{qudmapgen.1}
Let $\mathfrak{g}$ be a Lie algebra supplied with an inner product
$(\cdot,\cdot)$, $\dim \, \mathfrak{g}=n$. Then the quadratic map
$\xi_{(\cdot,\cdot)}$ (see (\ref{spec.mapp})) has the following properties:

1) For a given $Y\in \mathfrak{g}$, the operator $\ad(Y)$ is $(\cdot,\cdot)$-skew
symmetric if and only if $(\xi(X),Y)=0$ for all $X\in \mathfrak{g}$.

2) Let us define $\Delta=\Delta_{(\cdot,\cdot)}\in \mathfrak{g}$ by the equation
$(\Delta,Y)=\trace \ad(Y)$, $Y \in \mathfrak{u}$.
Then $\Delta$ is $(\cdot,\cdot)$-orthogonal to the unimodular kernel $\mathfrak{u}$ of
$\mathfrak{g}$. In particular, $\Delta=0$ if~$\mathfrak{g}$ is unimodular.

3) $\Delta=-\sum_{i=1}^n \xi(E_i)$ for any $(\cdot,\cdot)$-orthonormal basis $E_i$, $i=1,\dots, n$,
in $\mathfrak{g}$.
\end{proposition}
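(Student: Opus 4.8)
The plan is to unpack the definition of the quadratic map $\xi_{(\cdot,\cdot)}$ and translate each of the three claims into statements about the bilinear form $(X,Y)\mapsto ([X,Y],X)$. Recall that $\xi(X)=V$ is characterized by $(V,Y)=([X,Y],X)$ for all $Y$, so the map $\xi$ records exactly the information in the cubic form $X\mapsto ([X,Y],X)$. For part~1), I would fix $Y$ and observe that $(\xi(X),Y)=0$ for all $X$ says precisely that $([X,Y],X)=0$ for all $X$. The plan is to show this is equivalent to $\ad(Y)$ being skew-symmetric. Skew-symmetry of $\ad(Y)$ means $([Y,X],Z)+(X,[Y,Z])=0$ for all $X,Z$, equivalently $([Y,X],Z)=-(X,[Y,Z])$. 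Setting $Z=X$ gives $([Y,X],X)=-(X,[Y,X])=-([Y,X],X)$, hence $([Y,X],X)=0$; the converse follows by the standard polarization trick, replacing $X$ by $X+Z$ and using bilinearity to recover the full skew-symmetry identity from its quadratic (diagonal) restriction.

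\textbf{For part~2)}, I would first confirm that the defining equation $(\Delta,Y)=\trace\ad(Y)$ makes sense: the functional $Y\mapsto\trace\ad(Y)$ is linear, so by nondegeneracy of $(\cdot,\cdot)$ it is represented by a unique vector $\Delta$. The unimodular kernel is exactly $\mathfrak{u}=\{Y:\trace\ad(Y)=0\}=\ker(Y\mapsto\trace\ad(Y))$, so $(\Delta,Y)=0$ precisely when $Y\in\mathfrak{u}$; this is the assertion that $\Delta\perp\mathfrak{u}$. When $\mathfrak{g}$ is unimodular, $\mathfrak{u}=\mathfrak{g}$, forcing $\Delta=0$.

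\textbf{The main work is part~3)}, which connects $\Delta$ to the sum $\sum_i\xi(E_i)$ over an orthonormal basis. The plan is to compute $\bigl(\sum_i\xi(E_i),Y\bigr)=\sum_i([E_i,Y],E_i)$ for arbitrary $Y$ and show it equals $-\trace\ad(Y)=(-\Delta,Y)$; nondegeneracy then gives the vector identity. The key computation is to recognize $\sum_i([E_i,Y],E_i)$ as a trace. Writing $\ad(Y)(E_i)=[Y,E_i]=-[E_i,Y]$, the sum becomes $-\sum_i(\ad(Y)E_i,E_i)$, which is exactly $-\trace\ad(Y)$ since the $E_i$ form an orthonormal basis and the trace of an operator is $\sum_i(\text{operator}\,E_i,E_i)$ in such a basis. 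Thus $\bigl(\sum_i\xi(E_i),Y\bigr)=-\trace\ad(Y)=-(\Delta,Y)$ for all $Y$, giving $\sum_i\xi(E_i)=-\Delta$ as desired.

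\textbf{I expect the only genuine subtlety} to be the polarization argument in part~1): one must be careful that the quadratic identity $([X,Y],X)=0$ for \emph{all} $X$ truly recovers the full bilinear skew-symmetry and not merely its symmetric part. This works here because, after polarizing, the relevant cross term $([Z,Y],X)+([X,Y],Z)$ vanishes, and this is exactly the skew-symmetry of $\ad(Y)$ once one rewrites $[Z,Y]=-[Y,Z]$. Parts~2) and~3) are then essentially bookkeeping with the nondegeneracy of the inner product and the trace formula in an orthonormal basis.
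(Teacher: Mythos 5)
Your proof is correct and follows essentially the same route as the paper's: part 1) reduces to the polarization identity relating $([X,Y],X)=0$ for all $X$ to skew-symmetry of $\ad(Y)$ (which the paper merely ``recalls'' and you spell out correctly), part 2) is immediate from the definition of $\mathfrak{u}$, and part 3) is the identical trace computation $\sum_i([E_i,Y],E_i)=-\trace\ad(Y)$ in an orthonormal basis. You also correctly read the defining equation for $\Delta$ as $(\Delta,Y)=\trace\ad(Y)$ for all $Y\in\mathfrak{g}$ (the restriction ``$Y\in\mathfrak{u}$'' in the statement is evidently a typo, since it would fail to determine $\Delta$), which is the interpretation the paper's own proof of part 3) uses.
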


\begin{proof}
Let us prove the first assertion.
Recall that the operator $\ad(Y)$ is $(\cdot,\cdot)$-skew symmetric if and only if
$([X,Y],X)=0$ for all $X\in \mathfrak{g}$.
By the definition of $\xi_{(\cdot,\cdot)}$  we get
$(\xi(X),Y)=([X,Y],X)$
for every $X\in \mathfrak{g}$, that proves the required assertion.

The second assertion follows directly from the definition of the unimodular kernel $\mathfrak{u}$.

Let us prove the third assertion. Fix any $Y \in \mathfrak{g}$. By the definition
of $\xi_{(\cdot,\cdot)}$ we have $(\xi(E_i),Y)=([E_i,Y],E_i)$. Therefore,
$$
\left(\sum_{i=1}^n \xi(E_i), Y \right)=\sum_{i=1}^n (\xi(E_i),Y)=-\sum_{i=1}^n ([Y,E_i],E_i)=-\trace \ad(Y)=-(\Delta,Y),
$$
that proves the required result.
\end{proof}
\smallskip

We hope that the further study of the mapping (\ref{spec.mapp}) will allow to understand more
deeply the set of geodesic vectors for general metric Lie algebras.

\bigskip

\bigskip

{\bf Acknowledgements.}
The first author was supported by the Ministry of Education and Science of the Russian Federation
(Grant 1.308.2017/4.6).

\vspace{10mm}

\bibliographystyle{amsunsrt}

\vspace{5mm}

\end{document}